\newcommand{\card}[1]{\lvert #1\rvert} 
\newcommand{\ZZ}{{\mathbb Z}}
\newtheorem{thm}{Theorem}[section]
\newtheorem{lem}[thm]{Lemma}
\newtheorem{conj}[thm]{Conjecture}
\theoremstyle{definition}
\begin{document}
\title{Bootstrap Percolation, Connectivity, and Graph Distance}
\author{Hudson LaFayette}
\email{lafayettehl@vcu.edu}

\author{Rayan Ibrahim}
\email{ibrahimr3@vcu.edu}
\urladdr{https://raymaths.github.io}

\author{Kevin McCall$^{\ast}$}
\address{$^{\ast}$Corresponding author}
\email{mccallkj@alumni.vcu.edu}

\address{Department of Mathematics \& Applied Mathematics \\ 
   Virginia Commonwealth University \\Richmond, VA 23284 \\ USA  
   }

\subjclass{05C12, 05C35, 05C40}

\keywords{Bootstrap percolation, extremal graph theory, diameter, connectivity}

\begin{abstract}
    Bootstrap Percolation is a process defined on a graph which begins with an initial set of infected vertices.  In each subsequent round, an uninfected vertex becomes infected if it is adjacent to at least $r$ previously infected vertices.  If an initially infected set of vertices, $A_0$, begins a process in which every vertex of the graph eventually becomes infected, then we say that $A_0$ percolates.  In this paper we investigate bootstrap percolation as it relates to graph distance and connectivity.  We find a sufficient condition for the existence of cardinality 2 percolating sets in diameter 2 graphs when $r = 2$.  We also investigate connections between connectivity and bootstrap percolation and lower and upper bounds on the number of rounds to percolation in terms of invariants related to graph distance.
\end{abstract}

\maketitle

\section{Introduction}

Bootstrap percolation is a process defined on a graph, $G$.  The process begins with an initial set of infected vertices $A_0 \subseteq V(G)$. In each subsequent round, an uninfected vertex, $v$, becomes infected if $v$ is adjacent to at least $r$ previously infected vertices.  Once infected, vertices remain infected.  We use $A_t$ to denote the set of all infected vertices as of round $t$.  Symbolically, \[A_t = A_{t-1} \cup \{v \in V(G): |N(v) \cap A_{t-1}| \geq r\}\]

The parameter $r$ is called the percolation threshold.  If $G$ is a finite graph, then after a finite number of rounds, either all vertices of $G$ become infected or the infection stops at some proper subset of $V(G)$.  The set of infected vertices after the percolation process finishes is called the closure of $A_0$, denoted $\langle A_0 \rangle$. If $\langle A_0 \rangle = V(G)$, then we say that $A_0$ is contagious or $A_0$ percolates.

Bootstrap percolation was introduced by \textcite{Chalupa_1979}. One model that has received much attention is when the vertices of $A_0$ are selected randomly; each vertex is selected independently and every vertex of $G$ has probability $p$ of being initially selected. After the initial step, the infection proceeds deterministically.  This model has been studied extensively, for example in \cite{Balogh_2006a, Balogh_2012, Balogh_2006b, Balogh_1998, Balogh_2007, Holroyd_2003}. 

Another area of study is extremal problems.  The minimum size of a percolating set in a graph $G$ with percolation threshold $r$ is denoted $m(G,r)$.  Observe that if $|V(G)|$ is at least $r$, then $m(G,r) \geq r$. \textcite{Freund_2018} showed that for a graph $G$ of order $n$, if $\delta(G) \geq \frac{r-1}{r}n $ then $m(G,r) = r$.  Let $\sigma_2(G)$ be the minimum sum of degrees over all pairs of non-adjacent vertices of $G$.  \textcite{Freund_2018} proved that if $G$ satisfies Ore's condition, i.e., $\sigma_2(G) \geq n$, then  $m(G,2) = 2$.  Furthermore, they proved that both of these bounds are sharp.

\textcite{Gunderson_2020} extended the first result by showing that if the order of $G$ is sufficiently large, then the bound on the minimum degree can be weakened.  \textcite{Wesolek_2019} extended Gunderson's result by proving a lower bound on the minimum degree sufficient to guarantee a percolating set of size $\ell \geq r$.  \textcite{Dairyko_2020} extended \textcite{Freund_2018}'s theorem on Ore's condition by characterizing the graphs for which $\sigma_2(G) \geq n$ and $\sigma_2(G) \geq n - 1$ is required to guarantee $m(G,2) = 2$.  For all other graphs, $\sigma_2(G) \geq n - 2$ is sufficient.  Degree conditions on bootstrap percolation have also been studied in \textcite{Reichman_2012}. \textcite{Bushaw_2022} investigated other conditions for which $m(G,2) = 2$.  

Another problem is investigating $m(G,r)$ for particular classes of graphs.  One class which has received significant attention is the $d$-dimensional lattice on $n^d$ vertices, denoted $[n]^d$.  This has been studied in \cite{ Balogh_2010, Balogh_1998, Hambardzumyan_2020, Morrison_2018, Przykucki_2020}.

In this paper, we continue the investigations of \textcite{Bushaw_2022} by studying bootstrap percolation from the perspective other non-degree conditions.  In particular, we focus on diameter.  We begin with a proof of a conjecture from \cite{Bushaw_2022}.  Suppose $G$ is a connected graph of order at least 3 with at most 2 blocks.  If $G$ is diameter 2 and contains no induced cycle of length 5, then $m(G,2) = 2$.  In section 3, we explore the consequences of percolating sets of size $r$ on the connectivity of a graph.  In section 4, we examine the minimum number of rounds to percolation given the size of the percolating set in relation to the diameter and radius of a graph.  In section 5, we investigate the maximum number of rounds to percolation in terms of graph distance.  The problem of the number of rounds to percolation has also been investigated in \cite{Benevides_2015,Przykucki_2012}.  We close with some open problems.  

\section{A sufficient condition for 2-bootstrap percolation}

Before introducing the conjecture, we provide some background and definitions.  If a graph $G$ contains at least one pair vertices which percolate when $r = 2$, then we say that $G$ is 2-bootstrap good or 2-BG. A \textit{block} of $G$ is a maximally 2-connected subgraph of $G$.  If $B$ is a block of $G$, then we use $G[B]$ to denote the subgraph of $G$ induced by $B$.  It is shown in \textcite{Bushaw_2022} that a graph with more than two blocks cannot be 2-BG.  Since disconnected graphs of order more than two also cannot be 2-BG, we only concern ourselves with connected graphs.  Furthermore, since graphs of order less than two are trivially 2-BG, we only examine graphs of order three or more.  Hence, we define the set $\mathcal{G}$ as the collection of all connected graphs of order 3 or more with at most two blocks.  

A graph $G$ has a \textit{dominating vertex} if $G$ contains a vertex $v$ adjacent to all other vertices of $G$.  A graph $G$ is \textit{locally connected} if the open neighborhood of every vertex forms a connected graph.  We present the following lemma:

\begin{lem}\label{lem:dom}
If a graph $G$ is 2-connected and has a dominating vertex, then $G$ is locally connected. 
\end{lem}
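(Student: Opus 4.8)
The plan is to split into two cases according to whether the vertex whose open neighborhood we examine is the dominating vertex. Fix a dominating vertex $v$ of $G$. First I would handle the neighborhood of $v$ itself: since $v$ is adjacent to every other vertex, $N(v) = V(G) \setminus \{v\}$, so $G[N(v)]$ is exactly $G - v$. Because $G$ is $2$-connected, $G - x$ is connected for every $x \in V(G)$; in particular $G - v$ is connected, and hence $G[N(v)]$ is connected.

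Next I would treat an arbitrary vertex $u \neq v$. The key observation is that $v \in N(u)$ (as $v$ dominates $G$) and, moreover, $vw \in E(G)$ for every $w \in N(u) \setminus \{v\}$, again because $v$ dominates $G$. Thus, inside the induced subgraph $G[N(u)]$, the vertex $v$ is adjacent to every other vertex of $G[N(u)]$; that is, $G[N(u)]$ itself has a dominating vertex. Since a graph with a dominating vertex is connected (any two of its vertices are joined by a path of length at most $2$ through that vertex), $G[N(u)]$ is connected.

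As $u$ was arbitrary, the open neighborhood of every vertex of $G$ induces a connected subgraph, so $G$ is locally connected, which completes the argument.

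I do not anticipate a serious obstacle; the only point worth flagging is that the $2$-connectivity hypothesis is used exactly once, to guarantee that $G - v$ is connected, and it cannot be dropped: for instance the star $K_{1,3}$ has a dominating vertex but is not locally connected at its center, and it fails to be $2$-connected precisely because that center is a cut vertex.
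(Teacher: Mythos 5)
Your argument is correct and is essentially identical to the paper's proof: for $u \neq v$ the dominating vertex $v$ lies in $N(u)$ and joins any two of its vertices, and for $v$ itself $2$-connectivity guarantees $G - v = G[N(v)]$ is connected. No issues.
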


\begin{proof}
Let $v$ be a dominating vertex of $G$ and let $u$ be some other vertex of $G$. Since $v$ is in the open neighborhood of $u$, any two vertices in $N(u)$ are joined by $v$.  Hence, $N(u)$ is connected. As $G$ is 2-connected, $v$ cannot be a cut vertex. Hence, $(V(G) \setminus \{v\}) = N(v)$ is also connected.  
\end{proof}

The following are Theorem 2.16 and Conjecture 4.1 respectively in \cite{Bushaw_2022}.
\begin{lem}\label{lem:local2bg}
If a graph $G \in \mathcal{G}$ is locally connected, then it is 2-BG.  Furthermore, if $G$ has no leaf, then any pair of adjacent vertices will percolate in $G$.    
\end{lem}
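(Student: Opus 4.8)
The plan is to prove the \emph{furthermore} assertion and deduce the first assertion from it. First I would record that a locally connected graph $G$ of order at least $3$ cannot have a leaf: if $w$ had unique neighbour $x$, then since $G$ is connected of order $\ge 3$ the vertex $x$ has a further neighbour, so $w$ is an isolated vertex of $G[N(x)]$, contradicting local connectivity. Hence for $G \in \mathcal{G}$ locally connected the hypothesis ``$G$ has no leaf'' holds automatically, and since $G$ is connected of order at least $3$ it has at least one edge; so it suffices to prove that for \emph{every} edge $uv$ of a connected, locally connected graph $G$, the set $A_0 = \{u,v\}$ percolates.

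The engine of the argument is the following absorption claim: \emph{if $S \subseteq V(G)$ is closed (that is, $\langle S \rangle = S$) and $x \in S$ has at least one neighbour in $S$, then $N[x] \subseteq S$.} To prove it, fix $x' \in N(x) \cap S$ and let $w \in N(x)$ be arbitrary. Since $G[N(x)]$ is connected there is a path $x' = y_0, y_1, \dots, y_k = w$ lying inside $N(x)$. I claim $y_i \in S$ for all $i$, by induction: $y_0 = x' \in S$, and if $y_{i-1} \in S$ then $y_i$ is adjacent both to $x \in S$ and to $y_{i-1} \in S$, so $y_i$ has at least $2$ neighbours in $S$ and therefore $y_i \in \langle S \rangle = S$. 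In particular $w = y_k \in S$, so $N[x] \subseteq S$.

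To finish, set $S = \langle \{u,v\} \rangle$, which is closed by definition of the closure. Every vertex of $S$ has a neighbour in $S$: the initial vertices $u$ and $v$ are adjacent to each other, and any vertex infected in a later round became infected only because it had at least $r = 2$ previously infected neighbours, all of which lie in $S$. Applying the absorption claim to each $x \in S$ yields $N[x] \subseteq S$, so $S$ contains the closed neighbourhood of each of its vertices; equivalently, no edge of $G$ leaves $S$, so $S$ is a union of connected components of $G$. Since $G$ is connected and $S \supseteq \{u,v\} \neq \emptyset$, we get $S = V(G)$, i.e. $\{u,v\}$ percolates, which proves the \emph{furthermore} part and hence the whole lemma.

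The one point deserving care is the induction in the absorption claim: it is essential that $S$ be closed, so that ``$\ge 2$ neighbours in $S$'' already forces membership in $S$, and that local connectivity be invoked at the vertex $x$ being absorbed rather than anywhere else. Beyond that the proof is bookkeeping; in fact it never uses the at-most-two-blocks hypothesis (a locally connected graph in $\mathcal{G}$ is automatically $2$-connected, since a cut vertex, like a leaf, would disconnect its own neighbourhood), so that hypothesis only serves to keep the statement within the paper's setting.
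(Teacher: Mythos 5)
Your proof is correct, but note that the paper itself offers no proof of this lemma to compare against: it is imported verbatim as Theorem~2.16 of the cited Bushaw et al.\ paper, so you have supplied an argument where the authors supply only a citation. Your argument is sound at every step. The absorption claim is the right engine: for a closed set $S$ and $x\in S$ with a neighbour $x'\in S$, walking along a path in the connected graph $G[N(x)]$ from $x'$ to any $w\in N(x)$ and observing that each successive path vertex sees both $x$ and its predecessor in $S$ correctly forces $N[x]\subseteq S$, and applying this to every vertex of $S=\langle\{u,v\}\rangle$ (each of which has a neighbour in $S$, either its partner in $A_0$ or two earlier-infected vertices) shows $S$ is a union of components, hence all of $G$. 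Your preliminary observation is also correct and worth keeping: a leaf $w$ with neighbour $x$ would be isolated in $G[N(x)]$, which has a second vertex because $G$ is connected of order at least $3$, so local connectivity already rules out leaves (and, by the same reasoning applied to a cut vertex, forces $2$-connectedness); this explains why the ``no leaf'' clause of the lemma is vacuous under its own hypotheses and why the two-blocks condition in the definition of $\mathcal{G}$ plays no role. The one place a reader might stumble --- checking local connectivity at $w$ itself, where $N(w)=\{x\}$ is a singleton and hence connected --- you handle correctly by locating the violation at $x$ rather than at $w$.
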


\begin{conj}
If a graph in $\mathcal{G}$ is perfect and its diameter is no more than 2 then the graph is 2-bootstrap good.  
\end{conj}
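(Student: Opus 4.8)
The plan is to prove the slightly stronger statement that $G$ is 2-BG whenever $G\in\mathcal{G}$ has diameter at most $2$ and contains no induced $C_5$; the conjecture then follows, since a perfect graph cannot contain an induced $C_5$ (that subgraph would have chromatic number $3$ and clique number $2$). First I would dispose of the case $\operatorname{diam}(G)\le 1$: then $G$ is complete, and for any two vertices $u,w$ every other vertex is adjacent to both, so $\{u,w\}$ percolates in one round. Next I would reduce to the $2$-connected case. If $G$ has two blocks $B_1,B_2$ with cut vertex $c$, then every $B_1$--$B_2$ path passes through $c$, so a distance-$\le 2$ path between a vertex of $B_1\setminus\{c\}$ and a vertex of $B_2\setminus\{c\}$ must be of the form $a$--$c$--$b$; hence $c$ is adjacent to every other vertex. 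Then $G[B_i]$ is $2$-connected with a dominating vertex, so locally connected by Lemma~\ref{lem:dom}, so by Lemma~\ref{lem:local2bg} every edge of $G[B_i]$ percolates $G[B_i]$ (this is trivial if $B_i$ is a single edge, which also covers the leaf case). Picking $x\in B_1\setminus\{c\}$ and $y\in B_2\setminus\{c\}$, the non-adjacent pair $\{x,y\}$ first infects $c$ and then percolates each block, hence percolates $G$. This leaves the case where $G$ is $2$-connected, has diameter exactly $2$, and is not complete.

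For the $2$-connected case, the strategy is to choose a \emph{non-adjacent} pair $\{u,w\}$ whose closure $C:=\langle\{u,w\}\rangle$ is as large as possible among all non-adjacent pairs, and to show $C=V(G)$. Write $S:=N(u)\cap N(w)$, which is nonempty because $\operatorname{diam}(G)=2$, so $\{u,w\}\cup S\subseteq C$. Suppose $Z:=V(G)\setminus C$ is nonempty; any $z\in Z$ satisfies $|N(z)\cap C|\le 1$, as it never becomes infected. The first observation is that no vertex of $Z$ is adjacent to $u$ or to $w$: if $z\in Z$ with $z\sim u$ (so $z\not\sim w$, else $z\in S\subseteq C$), then $u$ is the unique neighbour of $z$ in $C$; a common neighbour $t$ of $z$ and $w$ then lies outside $C$, is non-adjacent to $u$, and is non-adjacent to every $s\in S$ (otherwise $t$ would have two infected neighbours); one checks that $\{u,z,t,w,s\}$ induces a $C_5$, a contradiction. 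Consequently every $z\in Z$ is non-adjacent to both $u$ and $w$, and then a common neighbour of $z$ and $u$ (which exists since $\operatorname{diam}(G)=2$) cannot lie in $Z$, so it lies in $C$ and must be the \emph{unique} $C$-neighbour $s_z$ of $z$; running the same argument with $w$ gives $s_z\in S$.

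Now I would use $2$-connectivity through the fibres $Z_s:=\{z\in Z:s_z=s\}$. If some component $K$ of $G[Z]$ lay inside a single fibre $Z_s$, then $s$ would be the only vertex of $C$ with a neighbour in $K$, so $s$ would be a cut vertex separating $K$ from $u$ --- impossible. Hence every component of $G[Z]$ meets at least two fibres, and walking along such a component produces an edge $z_1z_2$ of $G[Z]$ with $s_1:=s_{z_1}\ne s_{z_2}=:s_2$. If $s_1\not\sim s_2$, then $\{s_1,z_1,z_2,s_2,u\}$ induces a $C_5$: all five cycle edges are present, and all five potential chords ($s_1z_2$, $z_1s_2$, $z_1u$, $z_2u$, $s_1s_2$) are forbidden by the structure just established. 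If instead $s_1\sim s_2$, I would switch to the non-adjacent pair $\{u,z_1\}$: since $N(u)\cap N(z_1)=\{s_1\}$, its closure acquires $s_1$, then $s_2$ (whose neighbours $u$ and $s_1$ are infected), then $w$ (neighbours $s_1,s_2$) and $z_2$ (neighbours $z_1,s_2$); so this closure contains $\{u,w\}$ and hence all of $C$, yet also contains $z_1\notin C$ --- contradicting maximality of $\{u,w\}$. Either branch yields a contradiction, so $C=V(G)$ and $\{u,w\}$ percolates, proving $G$ is 2-BG.

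The main obstacle is precisely the final dichotomy: after learning that the obstruction set $Z$ is non-adjacent to both $u$ and $w$, one must either manufacture an induced $C_5$ or improve the chosen pair, and forcing this to close cleanly is what makes the extremal choice of $\{u,w\}$ essential and requires careful bookkeeping of which edges among $u,w$, the set $S$, and $Z$ are present or forbidden. I expect the bulk of the write-up to be the verification that the two exhibited $C_5$'s are induced and the fibre/cut-vertex argument; the reductions in the first paragraph are routine.
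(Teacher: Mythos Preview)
Your argument is correct and proves the same strengthened statement as the paper, but the $2$-connected case is handled by a genuinely different extremal object. The paper takes a maximal $2$-connected $2$-BG subgraph $H$, shows (via an induced $C_5$) that the unique $H$-neighbour $w$ of any outside vertex must be dominating in $H$, and then uses $2$-connectivity of $G$ together with Lemmas~\ref{lem:dom} and~\ref{lem:local2bg} to enlarge $H$. You instead maximise the closure of a \emph{non-adjacent pair} $\{u,w\}$ and analyse the residual set $Z$ directly: each $z\in Z$ is forced to have a unique anchor $s_z\in N(u)\cap N(w)$, the fibre structure plus $2$-connectivity yields a cross-fibre edge $z_1z_2$, and then either $s_1\not\sim s_2$ produces an induced $C_5$ or $s_1\sim s_2$ lets $\{u,z_1\}$ regrow $u,w$ (hence all of $C$) plus $z_1$. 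Your route bypasses local connectivity entirely in the $2$-connected case and never needs to exhibit a starting $2$-connected $2$-BG subgraph; the price is the more delicate bookkeeping you flag at the end (the two $C_5$ verifications and the fibre/cut-vertex step), which you have carried out correctly. The two-block reduction and the perfect $\Rightarrow$ no-induced-$C_5$ observation are the same as in the paper.
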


We present the following theorem, weakening the assumption that $G$ is perfect. 

\begin{thm}
If a graph $G \in \mathcal{G}$ has diameter 2 and contains no induced cycle of length 5, then $G$ is 2-bootstrap good.
\end{thm}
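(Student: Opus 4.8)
The plan is to split on whether $G$ has two blocks or is $2$-connected. Suppose first that $G$ has two blocks, sharing a cut vertex $c$. Every path from a vertex $u$ of one block to a vertex $u'$ of the other passes through $c$, so $d(u,u') = d(u,c) + d(c,u') \ge 2$; since $G$ has diameter $2$, this forces $u \sim c$ and $u' \sim c$, so $c$ is a dominating vertex. Each block with at least three vertices is then a $2$-connected graph in $\mathcal{G}$ with a dominating vertex, hence locally connected by Lemma~\ref{lem:dom}, and, having no leaf, every pair of adjacent vertices percolates in it by Lemma~\ref{lem:local2bg}. A percolating pair of $G$ can then be produced directly: if both blocks are nontrivial, take $u_1 \sim c$ in one and $u_2 \sim c$ in the other, so that $\{u_1,u_2\}$ infects $c$ in the first round and thereafter $\{c,u_i\}$ percolates the $i$-th block, the two block-processes not interfering since the blocks meet only at $c$; if one block is a pendant edge $c\ell$ and the other a nontrivial block $B$, then $\{\ell,x\}$ with $x \sim c$ in $B$ first infects $c$ and then percolates $B = G - \ell$; and if both blocks are edges then $G = P_3$, whose two leaves form a percolating pair. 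The hypothesis forbidding induced $5$-cycles is not needed here.

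Now suppose $G$ is $2$-connected. If $G$ is locally connected --- in particular if it has a dominating vertex, by Lemma~\ref{lem:dom} --- then Lemma~\ref{lem:local2bg} finishes, so assume $G$ is not locally connected and fix $v$ with $G[N(v)]$ disconnected; let $X_1,\dots,X_k$ ($k \ge 2$) be the components of $G[N(v)]$ and $M = V(G)\setminus N[v]$. First I would show that some vertex of $M$ is adjacent to two different $X_i$. Otherwise, writing $M_i$ for the vertices of $M$ all of whose neighbours in $N(v)$ lie in $X_i$, there is no edge between $X_i \cup M_i$ and $X_j \cup M_j$ for $i \ne j$ --- an edge $mm'$ inside $M$ with $m \in M_i$ and $m' \in M_j$ would produce, via a neighbour $a \in X_i$ of $m$ and a neighbour $b \in X_j$ of $m'$, the induced $5$-cycle $v\,a\,m\,m'\,b$ --- so $G - v$ would be disconnected, contradicting $2$-connectivity. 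Fix such a $w \in M$ with $w \sim a \in X_1$ and $w \sim b \in X_2$; then $a \not\sim b$ and $v \not\sim w$, so $v\,a\,w\,b$ is an induced $4$-cycle, and I claim $\{v,w\}$ percolates.

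After the first round $N(v)\cap N(w) \supseteq \{a,b\}$ is infected, so $\{v,w,a,b\} \subseteq A := \langle\{v,w\}\rangle$. Suppose $A \ne V(G)$ and pick $u \notin A$; as $A$ is closed, $u$ has at most one neighbour in $A$. If $u \sim v$ and $u \sim w$ then $u$ is infected in the first round, a contradiction; and if $u$ is adjacent to exactly one of $v,w$, say $u \sim v$, then a common neighbour $t$ of $u$ and $w$ (which exists as $G$ has diameter $2$) lies outside $A$ and has $w$ as its only neighbour in $A$, so that $v\,u\,t\,w\,a$ is an induced $5$-cycle --- again a contradiction. The remaining case, $u \not\sim v$ and $u \not\sim w$, is the main obstacle: the obvious cycles collapse, because a common neighbour of $u$ and $v$ and a common neighbour of $u$ and $w$ may coincide and already belong to $A$.

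To handle it, the plan is to use that $u$ is adjacent to at most one of $a,b$ (otherwise $u$ has two neighbours in $A$) and to chase $2$-paths out of $u$. If $u \sim a$, then a common neighbour $q$ of $u$ and $b$ gives the induced $5$-cycle $a\,u\,q\,b\,v$. If $u$ is adjacent to neither $a$ nor $b$, take a common neighbour $p$ of $u$ and $a$: if $p \in A$ then $v \not\sim p$, and a common neighbour $s$ of $u$ and $v$ yields $v\,s\,u\,p\,a$; if $p \notin A$ then $a$ is the only neighbour of $p$ in $A$, and, according to whether $p$ is adjacent to a suitable common neighbour $q$ of $u$ and $b$ or one instead uses a common neighbour $r$ of $a$ and $q$, one obtains $v\,a\,p\,q\,b$ or $a\,r\,q\,b\,v$. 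In every branch a forbidden induced $C_5$ appears, so $A = V(G)$ and $\{v,w\}$ percolates, which is what we want. Verifying that each of these vertex sequences really induces a chordless $5$-cycle --- every potential chord being ruled out either by the induced $4$-cycle $v\,a\,w\,b$ or by the fact that a vertex outside $A$ has at most one neighbour in $A$ --- and organising the case analysis so that it is exhaustive is where essentially all the work lies; this is the step I expect to be the main obstacle, and it is exactly here that diameter $2$ and the absence of induced $5$-cycles are used to the hilt.
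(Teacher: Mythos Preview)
Your two-block case matches the paper's. In the $2$-connected case the paper takes a different route: rather than exhibiting a specific percolating pair, it fixes a \emph{maximal} $2$-connected $2$-BG subgraph $H$, notes that any outside vertex has at most one neighbour in $H$, uses diameter $2$ and the $C_5$-free hypothesis to show that this unique neighbour $w$ dominates $H$, and then enlarges $H$ by one or two outside vertices to a strictly bigger $2$-connected $2$-BG subgraph, contradicting maximality. Only two short configurations need to be checked, and the argument never has to control where an outside vertex attaches beyond ``at most one neighbour in $H$''.

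Your plan instead commits to the pair $\{v,w\}$ and attempts to show $A=\langle\{v,w\}\rangle=V(G)$ directly. The hard branch $u\not\sim v,w,a,b$ has a genuine gap. The assertion ``if $p\in A$ then $v\not\sim p$'' is unjustified and can fail: nothing forbids the common neighbour $p$ of $u$ and $a$ from lying in $N(v)\cap N(w)\subseteq A$ and being adjacent to $a$ and to $u$; then $v\,s\,u\,p\,a$ has the chord $vp$, and the diameter-$2$ witness $s$ for $u$ and $v$ may well be $p$ itself. Likewise, in your ``$p\notin A$'' branch the common neighbour $q$ of $u$ and $b$ may lie in $A$, in which case neither $v\,a\,p\,q\,b$ nor $a\,r\,q\,b\,v$ is forced to be chordless. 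More structurally, $u$'s unique $A$-neighbour can be adjacent to all of $v,w,a,b$ simultaneously, and then every $5$-cycle on your list acquires a chord; the common-neighbour chase does not obviously terminate rather than spawning further sub-sub-cases. So it is not clear from the sketch that the particular pair $\{v,w\}$ you selected always percolates, and this is precisely the difficulty the paper's maximal-subgraph argument is designed to avoid.
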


We divide the proof into two cases.

\noindent \textbf{Case 1}: $G$ has $2$ blocks. 

\begin{proof}
In this case, we do not need the assumption that $G$ contains no induced cycles of length 5 or more. Let $v$ be a cut vertex of $G$, and let $B_1$ and $B_2$ be the blocks of $G$. Since $G$ has diameter $2$, $v$ is dominating in $G$. Hence, $G[B_1]$ and $G[B_2]$ are locally connected by Lemma \ref{lem:dom}. Pick $w\in B_1$ and $x \in B_2$ with $\{w,x\}$ as the initial infected set, which then infect $v$. Then, $\{w,v\}$ percolates in $B_1$ and $\{v,x\}$ percolates in $B_2$ by Lemma \ref{lem:local2bg}. So $G$ is $2$-BG, where any pair of vertices, with one vertex of the pair in $B_1-v$ and the other in $B_2-v$, percolates in $G$.
\end{proof}

\noindent \textbf{Case 2}: $G$ is 2-connected. 

\begin{proof}
Assume that $G$ is 2-connected, has diameter 2, and contains no induced cycles of length 5 or more. Suppose toward a contradiction that $G$ is not 2-BG.  Let $H$ be a maximal 2-connected, 2-BG subgraph of $G$. In other words, any subgraph of $G$ containing $H$ (other than $H$ itself) fails to be 2-connected or fails to be 2-BG. Observe that any vertex in $V(G) - V(H)$ has at most one neighbor in $H$. Since $G$ is connected and $H$ is a proper subgraph of $G$, there is a vertex $v \in V(G) - V(H)$ with exactly one neighbor, $w$, in $H$. \\

\textbf{Claim 1}: $w$ is adjacent to every vertex in $H$.  \\

\textbf{Proof of Claim 1}:  

\indent Suppose towards a contradiction that $w$ is not adjacent to some vertex $z \in V(H)$.  Since $G$ has diameter 2, there is some vertex $y\in V(G)$ such that $y$ is adjacent to both $w$ and $z$.  Observe, since $y$ is adjacent to $w$ and $z$, i.e. $y$ has $2$ neighbors in $H$, it must be that $y \in V(H)$. Since $v$ is only adjacent to a single vertex in $H$ and $G$ is diameter 2, there must be some vertex, $v'$ outside of $H$ such that $v'$ is adjacent to both $v$ and $z$ (see Figure \ref{fig:Claim1}). 

\begin{figure}[h]
    \centering
    \includegraphics[scale=1]{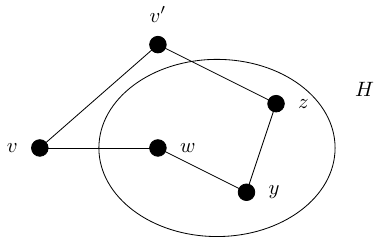}
    \caption{Claim 1.}\label{fig:Claim1}
\end{figure}

\indent Recall that a vertex outside $H$ can only be adjacent to a single vertex within $H$.  Hence, $v'$ cannot be adjacent to any vertex in $H$ other than $z$. The five vertices $v,v',w,y,z$ form an induced 5-cycle, contradicting our assumption that $G$ has no induced 5-cycles.  This proves our claim that $w$ is adjacent to every vertex in $H$.

If $w$ was the only vertex in $H$ adjacent to vertices outside of $H$, then $w$ would be a cut vertex, contradicting the assumption that $G$ is $2$-connected. So there must be a vertex $w'$ in $H$ with a neighbor $v'$ outside of $H$. Note that $v \ne v'$, as $v$ and $v'$ each have a unique neighbor in $H$. We now have two cases.

\textbf{Case 2a}: $v$ is adjacent to $v'$.  

If so, then infect $\{v,w'\}$.  These in turn infect $v'$ along with $w$.  By Lemma \ref{lem:local2bg}, $\{w,w'\}$ infects all of $H$.  But this means that $H \cup \{v,v'\}$ is a 2-connected, 2-BG subgraph  of $G$ containing $H$, in contradiction to our earlier assumptions.

\begin{figure}[h]
    \centering
    \includegraphics[scale=1]{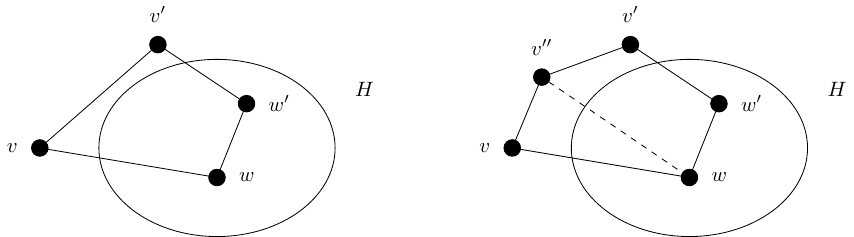}
    \caption{Case 2a on the left and Case 2b on the right.}\label{fig:Claim2}
\end{figure}

\textbf{Case 2b}: $v$ is not adjacent to $v'$. 

Since $G$ has diameter 2, there must be a vertex $v''$ which joins $v$ and $v'$. This vertex cannot be in $H$ because each of $v,v'$ is only adjacent to a single vertex in $H$. 

We now have two possibilities: $v''$ is adjacent exactly one of $w$ or $w'$; or $v''$ is adjacent to neither $w$ nor $w''$.  If $v''$ is adjacent to neither, then $v,v'',v',w',w$ form an induced 5-cycle.  If $v''$ is adjacent to $w$, then we can infect $v'',w'$, which in turn infect $v',w$ and $H' = H \cup \{v',v''\}$ forms a 2-BG, 2-connected subgraph containing $H$.  If $v''$ is adjacent to $w'$, then the situation is similar except that $H' = H \cup \{v,v''\}$.  This is shown in Figure \ref{fig:Claim2}. 

Cases 2a and 2b both lead to contradictions, so we conclude that there can be no such $H$ and $G$ must be 2-BG.
\end{proof}

\section{Connectivity and Bootstrap Percolation}

Let \(r\in\ZZ^+\). Similar to the definition of \(2\)-BG, if a graph \(G\) contains at least one set of \(r\) vertices which percolate, then \(G\) is \(r\)-bootstrap good or \(r\)-BG. A graph \(G\) is \emph{\(k\)-connected} if it has at least \(k+1\) vertices and does not contain a cut set of size \(k-1\).  Recall that a block is a maximal induced 2-connected subgraph of $G$.\par  
A graph is 1-BG if and only if it is connected.  In \cite{Bushaw_2022}, the following result is Lemma 2.1:

\begin{lem}
    \label{lem:2-BG graph structure}
    If a graph is \(2\)-BG, then it has at most two blocks.  
\end{lem}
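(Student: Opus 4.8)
The plan is to assume for contradiction that $G$ is $2$-BG, witnessed by a percolating pair $A_0=\{a,b\}$, yet $G$ has at least three blocks; we may assume $G$ is connected, since (as noted above) a disconnected graph of order more than two is not $2$-BG. The main tool will be the following ``first infected vertex'' observation, used twice. \textit{If $c$ is a cut vertex of $G$ and $C$ is a component of $G-c$ with $C\cap A_0=\emptyset$, then no vertex of $C$ ever becomes infected.} To see this, let $w$ be the first vertex of $C$ to be infected; at that round every neighbour of $w$ lies in $C\cup\{c\}$ and no vertex of $C$ is yet infected, so $w$ has at most one infected neighbour, which is impossible when $r=2$. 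Since $A_0$ percolates, every component of $G-c$ must therefore meet $A_0$, for every cut vertex $c$. As $|A_0|=2$ this yields two structural facts: (a) neither $a$ nor $b$ is a cut vertex, hence each lies in a unique block, say $B_a$ and $B_b$; and (b) for every cut vertex $c$, the graph $G-c$ has exactly two components (one containing $a$, one containing $b$), equivalently $c$ lies in exactly two blocks and separates $a$ from $b$.

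I would then pass to the block--cut tree $T$ of $G$ (the tree on the blocks and cut vertices of $G$, with a block adjacent to each cut vertex it contains). Fact (b) says that every cut-vertex node of $T$ has degree exactly two and lies on the path $P$ in $T$ from $B_a$ to $B_b$. From this I claim that every block node lies on $P$ as well: if a block $D$ were off $P$, look at the node $x$ of $P$ nearest to $D$; if $x$ is a cut-vertex node it acquires a third neighbour (one step toward $D$) beyond its two neighbours along $P$, contradicting degree two, and if $x$ is a block node then the first step from $x$ toward $D$ is a cut-vertex node lying off $P$, contradicting (b). Hence $T$ is exactly the path $B_a=D_0 - c_1 - D_1 - c_2 - \cdots - c_m - D_m = B_b$, and since $G$ has at least three blocks we get $m\ge 2$, so there is a ``middle'' block $D_1$. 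Applying (b) at $c_1$ and at $c_m$, and relabelling if needed, we may assume $a\in D_0\setminus\{c_1\}$ and $b\in D_m\setminus\{c_m\}$; in particular $a$ belongs to no block but $D_0$ and $b$ belongs to no block but $D_m$.

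The second application of the observation finishes it. Suppose some $w\notin A_0$ becomes infected and take the first such $w$; then the only infected vertices at that moment are $a$ and $b$, so both are neighbours of $w$. But the neighbourhood of $w$ is contained in the union of the blocks containing $w$, which is either a single $D_i$ (if $w$ is not a cut vertex) or $D_{j-1}\cup D_j$ (if $w=c_j$), using that a non-cut-vertex lies in one block and that each $c_j$ lies in exactly two blocks. Since $a$ lies only in $D_0$, $b$ lies only in $D_m$, and $m\ge 2$, these indices cannot simultaneously be $0$ and $m$; so $w$ has at most one infected neighbour, a contradiction. Hence $\langle A_0\rangle=A_0\ne V(G)$, so $A_0$ does not percolate -- contradiction. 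Therefore $G$ has at most two blocks.

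The routine part is the two ``first infected vertex'' arguments; the step I expect to be fiddliest is the middle one, namely extracting the exact block--cut tree shape -- ruling out blocks that branch off $P$ (including single-edge blocks, which have empty interior) and handling degenerate configurations such as $B_a=B_b$ or a cut vertex lying in three or more blocks -- since everything afterward hinges on $T$ being precisely a path with a genuine middle block.
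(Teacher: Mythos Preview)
Your argument is correct. The two ``first infected vertex'' steps are sound, and the block--cut tree reduction to a path is handled carefully, including the edge cases you flag (degree-two cut-vertex nodes, blocks off the path, $B_a=B_b$). The final step---that the first newly infected vertex $w$ would have to lie in a block containing $a$ and simultaneously in a block containing $b$, which is impossible once the path has length $m\ge 2$---is clean.

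As for comparison: the paper does not actually prove this lemma; it is quoted as Lemma~2.1 of \cite{Bushaw_2022}. What the paper \emph{does} prove is the generalization Theorem~\ref{thm: max number of blocks} (an $r$-BG graph has at most $r$ blocks), and the route there is different from yours. Rather than analyzing the block--cut tree, the paper shows via Lemma~\ref{lem:second round lemma} that any cut vertex must be adjacent to every vertex of $A_0$, and then argues directly that a second cut vertex $u$ cannot exist: each component of $G-u$ must meet $A_0$, but the first cut vertex $v$ (being adjacent to all of $A_0$) would reconnect them. This gives ``at most one cut vertex,'' and then the component count from Lemma~\ref{lem:component structure lemma} bounds the number of blocks by $r$. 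Your proof reaches the same conclusion by a more structural path---pinning down $T$ as a path and locating $a,b$ at its ends---which is a bit longer but arguably more transparent about \emph{why} three blocks fail: no single vertex can see both endpoints. The paper's approach, by contrast, isolates the key fact (cut vertices dominate $A_0$) in a reusable lemma that drives several later results.
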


In this section, we seek to expand on the result in Lemma~\ref{lem:2-BG graph structure} by investigating the effect of percolating sets of size $r$, where $r \geq 3$, on the connectivity of graphs. This topic was investigated independently by \textcite{Flippen_2022}, who showed that \(3\)-BG graphs have at most three leaf blocks (a block that is a leaf in a block-cut graph).  A natural first question is, ``what is the maximum number of blocks of an $r$-BG graph?"  Before answering this question, we present the following lemma: 

\begin{lem}
\label{lem:component structure lemma}
Let $G$ be an $r$-BG graph with at least $r + 1$ vertices and $A_0$ be a cardinality $r$ percolating set of $G$. If $X$ is a cut set of $G$ with $\card{X}<r$ and $K$ is the set of components of $G - X$ which are not contained in $A_0$, then \(\card{V(C)\cap A_0}\geq r-\card{X}\) for each \(C\in K\). Moreover,  $|K| \leq \lfloor r/(r - \card{X}) \rfloor$ and if $|K| \geq 2$, then $r/2 \leq |X| \leq r - 1$. 
\end{lem}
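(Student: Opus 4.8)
The plan is to extract structural information by tracking how infection can spread across the cut set $X$. The starting observation is that when $|X| < r$, no vertex outside $X$ can become infected purely from neighbors in $X$ — a newly infected vertex $v$ needs $r$ infected neighbors, and at most $|X|$ of them can lie in $X$, so $v$ must have at least $r - |X|$ infected neighbors inside its own component of $G - X$. First I would fix a component $C \in K$ and argue that $|V(C) \cap A_0| \geq r - |X|$. Suppose not. Run the percolation process and consider the first round in which some vertex of $C$ outside $A_0$ becomes infected; at that moment all previously infected vertices of $C$ lie in $A_0 \cap V(C)$ (since $C$'s only route to the rest of the graph is through $X$, which cannot transmit infection into $C$ on its own), so this vertex has fewer than $r - |X|$ infected neighbors in $C$ and at most $|X|$ in $X$, hence fewer than $r$ in total — contradiction. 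Therefore no vertex of $C$ outside $A_0 \cap V(C)$ ever gets infected, contradicting that $A_0$ percolates (and that $C \not\subseteq A_0$ means $C$ has such a vertex). Hence $|V(C) \cap A_0| \geq r - |X|$.

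Next I would count. The sets $V(C) \cap A_0$ for $C \in K$ are pairwise disjoint subsets of $A_0 \setminus X$, and $|A_0| = r$, so
\[
|K| \cdot (r - |X|) \leq \sum_{C \in K} |V(C) \cap A_0| \leq |A_0 \setminus X| \leq r,
\]
giving $|K| \leq r / (r - |X|)$, and since $|K|$ is an integer, $|K| \leq \lfloor r/(r - |X|) \rfloor$. For the last clause, suppose $|K| \geq 2$. The upper bound $|X| \leq r - 1$ is just the hypothesis $|X| < r$. For the lower bound, $|K| \geq 2$ forces $2(r - |X|) \leq r$, i.e. $|X| \geq r/2$. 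This completes the argument.

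The content here is entirely in the first step — the clean observation that a cut set smaller than $r$ is an "insulator" that cannot by itself push infection between components — and the main thing to be careful about is the phrasing of the first-round argument: I want to track the infection restricted to $V(C)$ and note that its infected-neighbor contributions split cleanly into a within-$C$ part and a within-$X$ part (nothing else is adjacent to $C$), with the within-$C$ infected set trapped in $A_0 \cap V(C)$ until the first escape. One small point to handle: a component $C$ could a priori be entirely contained in $A_0$, which is exactly why $K$ excludes those; for $C \in K$ there genuinely is a vertex needing to be infected by the process, which is what drives the contradiction. The counting and the final inequalities are then immediate.
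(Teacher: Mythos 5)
Your proof is correct and follows essentially the same route as the paper's: both identify the first vertex of $C$ outside $A_0$ to become infected, observe that its infected neighbors at that moment lie in $(A_0\cap V(C))\cup X$, deduce $\card{V(C)\cap A_0}\ge r-\card{X}$, and then count disjoint intersections inside $A_0$ to get the bound on $\card{K}$ and the range for $\card{X}$. The only cosmetic difference is that you phrase the first step as a contradiction while the paper derives the inequality directly.
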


\begin{proof}
The first part of the second statement implies the second part of the second statement since $\card{X} < r/2$ implies $r-\card{X} > r/2$ and thus $|K| \leq r/(r-\card{X}) < 2$. \par

Suppose $C$ is a component of $G - X$ and $C \in K$.  Since $C \in K$, there is some vertex, $v$, in $C$ which is not initially infected.  Without loss of generality, we may let $v$ be the earliest infected vertex of $C$ which is not initially infected (it is possible that there are multiple choices for $v$).  Since $v$ is the earliest infected vertex, $v$ cannot be infected by other vertices of $C$ and in fact can only be infected by vertices of $A_0$ or $X$, i.e., $\card{N(v) \cap (X \cup A_0)} \geq r$. Let \(i:=r- \card{X}\). Since $N(v) \subseteq V(C)\cup X$ and $\card{X} = r - i$ we must have $\card{A_0 \cap V(C)} \geq i$. No two components of $G - X$ have any vertices in common, so $|K| \cdot i \leq |A_0| = r$, which implies that $K \leq r/i$.
\end{proof}

Throughout this section, we will use the notation from the above lemma: $G$ is a graph, $A_0$ is a percolating set of $G$, and $X$ is a cut set of $G$.  For simplicity, we will use the term component to refer to a subgraph of $G$ induced by a component of $G - X$. Observe that a cut set $X$, when $|X| < r$, separates any percolating set of size $r$. If all vertices of $A_0$ are in the same component of $G - X$, then no other component of $G - X$ can become infected. Likewise, no component can have zero vertices of $A_0$, otherwise no vertices of the component would be able to become infected. Since each component must have at least one vertex of $A_0$, we can have at most $r$ components of $G - X$. By Lemma \ref{lem:component structure lemma}, this can only occur when $|X| = r - 1$. In fact, this bound is sharp.  Here is one family of graphs which attains the bound: let $G$ be a graph with $r$ disjoint nonempty complete subgraphs $H_1, H_2, ..., H_r$ and let $X$ be a set of $r - 1$ vertices each adjacent to every vertex in every $H_i$. Then, select one vertex from each $H_i$ to be initially infected. These $r$ vertices infect $X$. Then each $H_i$ is infected by $X$ together with its single infected vertex. See Figure \ref{fig:3components} for an example when \(r=3\).

\begin{figure}[h]
    \centering
    \includegraphics[scale=1]{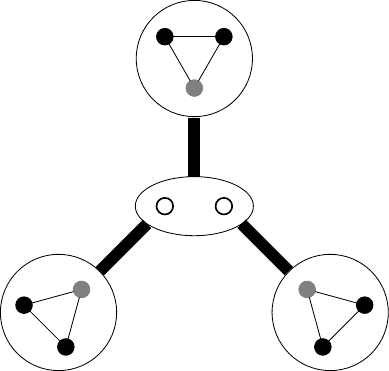}
    \caption{A 3-BG graph with 3 components and a cut set of size 2. The gray vertices are a percolating set.}
  \label{fig:3components}
\end{figure}

We require one more lemma before determining the maximum number of blocks in an $r$-BG graph.  

\begin{lem}
\label{lem:second round lemma}
Let $G$ be an $r$-BG graph with at least $r + 1$ vertices and $A_0$ be a cardinality $r$ percolating set of $G$. If $X$ is a cut set of $G$ with $\card{X}<r$, then at least one vertex of $X$ is adjacent to every vertex of $A_0$.  
\end{lem}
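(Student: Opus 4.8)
The plan is to locate a vertex of $X$ adjacent to all of $A_0$ by tracking the first vertex that becomes infected outside $A_0$. Since $A_0$ percolates and $|V(G)| \geq r+1 > r = |A_0|$, the set $V(G)\setminus A_0$ is nonempty and the process does not stall at $A_0$, so $A_1\setminus A_0\ne\emptyset$; fix any $v\in A_1\setminus A_0$. Such a $v$ became infected in the first round, so every infected neighbor of $v$ lies in $A_0$ and $|N(v)\cap A_0|\geq r$; as $|A_0|=r$ this forces $A_0\subseteq N(v)$. It therefore suffices to show that $v\in X$.

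To do this I would use two facts about $G-X$. First, since $X$ is a cut set, $G-X$ has components $C_1,\dots,C_k$ with $k\geq 2$. Second, each $C_i$ contains at least one vertex of $A_0$: this is immediate from Lemma~\ref{lem:component structure lemma}, or directly because the first vertex of $C_i$ to be infected would otherwise need $r$ already-infected neighbors, all necessarily in $X$, contradicting $|X|<r$. Now suppose $v\notin X$, say $v\in V(C_i)$. Every neighbor of $v$ lies in $V(C_i)\cup X$ because $C_i$ is a component of $G-X$, so $A_0\subseteq N(v)\subseteq V(C_i)\cup X$. Choosing any $j\ne i$ (possible since $k\geq 2$), the set $V(C_j)$ is disjoint from $V(C_i)\cup X$, whence $A_0\cap V(C_j)=\emptyset$, contradicting the second fact. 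Hence $v\in X$, and $A_0\subseteq N(v)$ gives the conclusion.

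I do not expect a substantial obstacle: the whole argument rests on the single observation that a first-round infection outside $A_0$ is automatically adjacent to all $r$ vertices of $A_0$, after which the component bookkeeping — and the role of the hypothesis $|X|<r$ in ruling out an $A_0$-free component — is routine. The only points that need care are the degenerate checks: that $A_1\setminus A_0$ is nonempty, that the chosen $v$ is genuinely outside $A_0$, and that nothing breaks if $X$ happens to meet $A_0$; all of these are settled by the count $|A_0|=r$.
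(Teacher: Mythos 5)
Your proof is correct. It rests on the same key observation as the paper's: any vertex infected in the first round of spreading needs $r$ neighbors in $A_0$, and since $|A_0|=r$ this forces adjacency to all of $A_0$; the remaining work is to show such a vertex lies in $X$. Where you differ is in how that last step is handled. The paper first proves $X\not\subseteq A_0$ and then splits into two cases according to whether every component of $G-X$ is contained in $A_0$, arguing in each case by counting infected vertices that the first new infection must land in $X$. You instead exploit the containment $A_0\subseteq N(v)$ directly: if $v$ sat in a component $C_i$, then $A_0\subseteq V(C_i)\cup X$ would leave every other component devoid of $A_0$-vertices, contradicting the fact (from Lemma~\ref{lem:component structure lemma}, or the first-infected-vertex argument you sketch) that each component of $G-X$ must meet $A_0$ when $|X|<r$. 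This eliminates both the case analysis and the preliminary claim $X\not\subseteq A_0$, giving a shorter and arguably cleaner argument; the paper's version, in exchange, makes more explicit where the uninfected vertices can live, which it reuses informally in the structural discussion that follows.
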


\begin{proof}
Since $\card{G}>r$ we have $V(G) \not\subseteq A_0$.  We further claim that $X \not\subseteq A_0$.  Since $X$ is a cut set, $G - X$ contains at least two components.  In the proof of Lemma \ref{lem:component structure lemma} it is shown that each component \(C\) of $G - X$ that is not completely contained in \(A_0\) contains at least $r - |X|$ vertices of $A_0$. So $X \subseteq A_0$ would imply that $X$ together with such a \(C\) would contain all of $A_0$ (a \(C\) must exist since \(\card{G}>r\)). But since each other component must have at least one vertex of $A_0$, this is a contradiction. So \(X\not\subseteq A_0\). We consider two cases.

Case 1: Every component of $G - X$ is contained in $A_0$. 

In this case, the only vertices which remain to be infected are the vertices of $X$ which are not contained in $A_0$.  Since $A_0$ is a percolating set, these vertices become infected at some point.  Hence, at least one such vertex is infected in the second round, i.e., is adjacent to all vertices of $A_0$.

Case 2: Some component of $G - X$ is not contained in $A_0$.

In this case uninfected vertices occur in $X$ as well as in any component $C$, where $C \not\subseteq A_0$.  No component contains every vertex of $A_0$ and $X$ is not a subset of $A_0$, so after the initial round, the number of infected vertices in $C \cup X$ is less than $r$, for any such $C$.  Vertices of $C - A_0$ can only become infected from vertices in $C$ or $X$, so before any such vertex can become infected, at least one vertex of $X$ must be infected.  Hence, at least one vertex of $X$ is infected in the second round, i.e., is adjacent to all vertices of $A_0$.
\end{proof}

From these two lemmas, we have the following result that generalizes Lemma~\ref{lem:2-BG graph structure}: 

\begin{thm}
\label{thm: max number of blocks}
Let \(r\geq 2\). If $G$ is an $r$-BG graph with at least $r + 1$ vertices, then $G$ contains at most $r$ blocks. Moreover, \(r\) blocks is only achieved by \(G=K_{1,r}\) when $r \geq 3$.
\end{thm}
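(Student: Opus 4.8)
The plan is to work through the block--cut tree of $G$ (which is connected, since $r$-BG graphs are) by applying Lemmas~\ref{lem:component structure lemma} and~\ref{lem:second round lemma} to the singleton cut sets $X=\{c\}$, one for each cut vertex $c$. Since $r\ge 2$, $\{c\}$ is a cut set of size $1<r$, so Lemma~\ref{lem:second round lemma} forces $c\notin A_0$ and $c$ adjacent to \emph{every} vertex of $A_0$, while Lemma~\ref{lem:component structure lemma}, using $\lfloor r/(r-1)\rfloor = 1$ when $r\ge 3$, forces at most one component of $G-c$ to fail to be contained in $A_0$. As the case $r=2$ is exactly Lemma~\ref{lem:2-BG graph structure}, I would assume $r\ge 3$ throughout. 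For a fixed cut vertex $c$ this yields a dichotomy: either (a) every component of $G-c$ is contained in $A_0$, or (b) exactly one component $C^{\ast}$ of $G-c$ is not.

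In case (a), the components of $G-c$ partition $V(G)\setminus\{c\}$ and all lie in $A_0$, so $|V(G)|-1\le|A_0|=r$; combined with the hypothesis $|V(G)|\ge r+1$ this gives $|V(G)|=r+1$, $A_0=V(G)\setminus\{c\}$, and $c$ dominating. Then $c$ is the \emph{unique} cut vertex of $G$ (deleting any $x\ne c$ leaves $c$ adjacent to everything else), and for each component $C_i$ of $G-c$ the subgraph $G[C_i\cup\{c\}]$ is $2$-connected (or a $K_2$), hence a single block; therefore the number of blocks of $G$ equals the number of components of $G-c$, which is at most $|A_0|=r$, with equality exactly when every component is a single vertex, i.e.\ $G=K_{1,r}$.

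In case (b), Lemma~\ref{lem:component structure lemma} gives $|V(C^{\ast})\cap A_0|\ge r-1$; since every other component of $G-c$ is a nonempty subset of $A_0$ vertex-disjoint from $C^{\ast}$, counting $A_0$ shows $G-c$ has exactly two components, the second being a single vertex $w_c\in A_0$ whose only neighbor in $G$ is $c$, i.e.\ a leaf pendant at $c$. The key observation is that $G$ then has at most one cut vertex: a second cut vertex $c'$ would, by Lemma~\ref{lem:second round lemma}, be adjacent to every vertex of $A_0$ and hence to $w_c\in A_0$, contradicting that $c$ is the unique neighbor of $w_c$ (and $c'\ne w_c$ since the leaf $w_c$ is not a cut vertex). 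With at most one cut vertex, $G$ is either $2$-connected ($1$ block) or, splitting off the pendant $K_2$ at $c$ and checking that $G[C^{\ast}\cup\{c\}]$ is a single block, has exactly $2$ blocks; since $r\ge 3$ this is strictly fewer than $r$, so no equality arises in case (b).

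Putting the cases together: if any cut vertex falls in case (a) we get the sharp bound together with the identification $G=K_{1,r}$ at equality; otherwise every cut vertex is in case (b) and $G$ has at most $2\le r$ blocks. The step I expect to demand the most care is case (a): verifying that when $c$ is a dominating vertex the blocks of $G$ are in bijection with the components of $G-c$ (in particular that no block is hidden inside a component), and confirming that equality in the block count is attained only by the star. The remainder is a fairly direct exploitation of the two earlier lemmas specialized to singleton cut sets.
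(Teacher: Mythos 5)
Your proof is correct and rests on the same ingredients as the paper's: Lemmas~\ref{lem:component structure lemma} and~\ref{lem:second round lemma} applied to a singleton cut set, uniqueness of the cut vertex, and the identification of blocks with components of $G-c$. Your case dichotomy (all components inside $A_0$ versus one outside) and the resulting pendant-leaf/dominating-vertex arguments for uniqueness are a slightly more elaborate packaging of the paper's single direct argument, but not a genuinely different route.
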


\begin{proof}
Note that blocks are separated by a cut vertex. Let $X = \{v\}$ be a cut vertex of \(G\). By Lemma \ref{lem:second round lemma}, $v$ is adjacent to every vertex of $A_0$. We claim that there cannot be more than a single cut vertex in an $r$-BG graph.  Suppose for contradiction we have a second cut vertex $u$. Each component of $G - \{u\}$ must contain at least one vertex of $A_0$.  But then, $u$ cannot be a cut vertex because these components are still connected by $v$. Thus \(v\) is the only cut vertex in \(G\) and the number of blocks is exactly the number of components of \(G-X\). By Lemma \ref{lem:component structure lemma}, the largest number of components of $G - X$ in general is $r$.\par 
If \(r\geq 3\) Lemma~\ref{lem:component structure lemma} implies at most one component of \(G-X\) is not contained in \(A_0\) since \(\card{X}<r/2\). So the largest number of blocks only occurs when \(G=K_{1,r}\) and every leaf is contained in \(A_0\). 
\end{proof}

Lemma \ref{lem:component structure lemma} also allows us to analyze the structure of $r$-BG graphs with cut sets of size less than $r$.  As an example, we will examine 3-BG graphs.  We know that $1\leq |X|\leq 2$ and components of $G - X$ can either be contained in $A_0$ or not.  We also know from Lemma \ref{lem:component structure lemma} that components not contained in $A_0$ must contain at least $3-|X|$ vertices of $A_0$.\par

Suppose $|X| = 1$. If every component is contained in $A_0$, then we can have either 2 or 3 components. These possibilities are shown by the leftmost and middle graphs in Figure \ref{fig:cutvertex1} (the gray vertices are the vertices of $A_0$).  It is also possible that one component of $G - X$ is not contained in $A_0$.  Since such a component must contain at least $3 - |X| = 2$ vertices of $A_0$, we can only have one such component and the other must be entirely contained within $A_0$, i.e. a leaf.  One example is shown by the rightmost graph in Figure \ref{fig:cutvertex1}.  

\begin{figure}[h]
    \centering
    \includegraphics[scale=1]{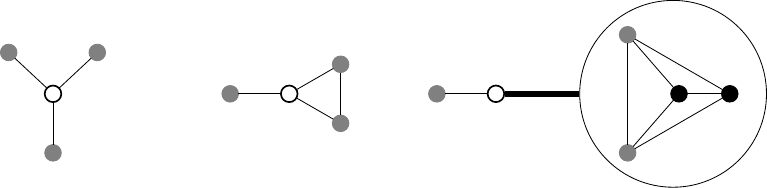}
    \caption{Some cases when $G$ is 3-BG and $G$ has a cut set of size 1. Vertices in the cut set are white, and vertices in $A_0$ are gray.}
\label{fig:cutvertex1}
\end{figure}

Suppose $|X| = 2$. When every component is contained within $A_0$ then we have the same possibilities as before except vertices of $A_0$ now must be adjacent to both vertices of $X$. The leftmost and middle graphs of Figure \ref{fig:cutvertex2} provide examples (it also possible that the vertices of $X$ are adjacent).  If some components of $G - X$ are not contained in $A_0$, then because $|X| = 2$, each such component needs to contain at least one vertex of $A_0$. Hence we may form such a graph by replacing any of the single vertex components with a connected graph of order 2 or more.  The rightmost graph of Figure \ref{fig:cutvertex2} provides such an example, where every vertex of the $K_3$ is joined to $X$. Figure \ref{fig:3components} provides an example where all three components are not subsets of $A_0$. 

\begin{figure}[h]
    \centering
    \includegraphics[scale=1]{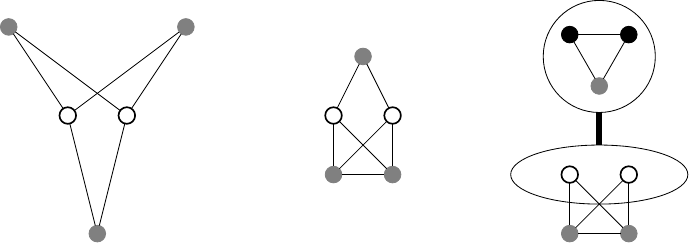}
    \caption{Some cases when $G$ is 3-BG and $G$ has a cut set of size 2. Vertices in the cut set are white, and vertices in $A_0$ are gray.}
\label{fig:cutvertex2}
\end{figure}

In addition to the structure of the components, we have the following result concerning the structure of cut sets in an $r$-BG graph with a cut set of size less than $r$.

\begin{thm}
\label{thm:disjoint cut set}
Let $G$ be an $r$-BG graph with at least $r + 1$ vertices and $A_0$ be a cardinality $r$ percolating set of $G$. If $X$ is a cut set of $G$ with $\card{X}<r$, then there is no cut set $Y$ where $|Y| < r$ and $Y \cap X = \emptyset$.
\end{thm}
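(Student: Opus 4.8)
The plan is to argue by contradiction: suppose $X$ and $Y$ are disjoint cut sets, each of size less than $r$. By Lemma~\ref{lem:second round lemma}, there is a vertex $x \in X$ adjacent to every vertex of $A_0$, and likewise a vertex $y \in Y$ adjacent to every vertex of $A_0$; since $X \cap Y = \emptyset$ we have $x \ne y$. The first step is to understand how $Y$ sits relative to the component structure of $G - X$. By Lemma~\ref{lem:component structure lemma}, $G - X$ has at most $\lfloor r/(r-\card{X})\rfloor$ components containing vertices of $A_0$ outside $X$, and if there are $\geq 2$ such components then $r/2 \leq \card{X} \leq r-1$; moreover each such component $C$ satisfies $\card{V(C) \cap A_0} \geq r - \card{X} \geq 1$. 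The same holds with the roles of $X$ and $Y$ exchanged.

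Next I would locate $y$: since $y \notin X$, it lies in some component $C_0$ of $G - X$. The key observation I want to extract is that $Y$ must ``reach into'' essentially every component of $G - X$ that is not contained in $A_0$ — otherwise $Y$ fails to separate those components from one another, because they remain joined through the path structure inside $G - X$ together with $X$ (and $X$ is still connected to all components). More precisely: if $G - Y$ has two components $D_1, D_2$, then since $X$ is disjoint from $Y$ the set $X$ lies entirely in one of the components of $G - Y$, say $D_1$; then any component of $G - X$ not contained in $A_0$ contributes a vertex of $A_0$ adjacent (via Lemma~\ref{lem:second round lemma}, applied to $Y$) to $y$, hence lies in the component of $G - Y$ containing $y$. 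Combining this with the symmetric statement forces all the ``non-$A_0$'' components of $G-X$ and of $G-Y$ to coincide and to be joined to both $x$ and $y$, and then a short counting argument on how the $r$ vertices of $A_0$ must be distributed — each non-$A_0$ component of $G-X$ eats at least $r - \card{X}$ of them, each non-$A_0$ component of $G-Y$ eats at least $r - \card{Y}$ of them, and these components overlap — yields $\card{A_0} > r$, the contradiction. The edge case where every component of $G - X$ (or of $G - Y$) is contained in $A_0$ has to be handled separately using Case~1 of Lemma~\ref{lem:second round lemma}: then the only uninfected vertices are in $X$ (resp.\ $Y$), and one checks $\card{G} > r$ forces a component outside $A_0$ after all, or directly that $X$ and $Y$ cannot both be disjoint cut sets in such a small graph.

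The main obstacle I anticipate is the bookkeeping in the ``mixed'' case — where, say, $G-X$ has components both inside and outside $A_0$ while $G-Y$ is configured differently — and in particular verifying cleanly that $X$ lands inside a single component of $G-Y$ and that this is incompatible with $Y$ actually disconnecting the graph. I expect the cleanest route is to show that for any component $C$ of $G-X$ with $C \not\subseteq A_0$, the subgraph $G[V(C) \cup X]$ is connected and $G[V(C) \cup X \cup \{y\}]$ is connected (using $y$'s adjacency to all of $A_0 \cap V(C)$), so that $X \cup \{y\}$ together with all these $C$'s forms a connected subgraph meeting every part of $G$ except possibly some leaves inside $A_0$; then $Y$, being disjoint from $X \cup \{y\}$, cannot separate this connected ``core,'' and one pushes the remaining leaves into the picture via the same adjacency. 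Getting the quantifiers right there — ``at least one vertex of $X$'' versus ``$x$ specifically'' — is the delicate point, but it should reduce to the two lemmas plus the pigeonhole bound $\sum (r - \card{X}) \leq r$ already used to prove Theorem~\ref{thm: max number of blocks}.
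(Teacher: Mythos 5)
You have both of the facts that the proof actually needs --- Lemma~\ref{lem:second round lemma} gives a vertex $x\in X$ adjacent to every vertex of $A_0$, and the argument inside Lemma~\ref{lem:component structure lemma} shows that \emph{every} component of $G-Y$ must contain a vertex of $A_0$ (otherwise its first infected vertex would need $r$ infected neighbours all lying in $Y$, and $\card{Y}<r$) --- but the way you propose to combine them does not go through, and it hides a two-line finish. Since $Y\cap X=\emptyset$, the vertex $x$ lies in some component of $G-Y$; every component of $G-Y$ contains a vertex of $A_0$, hence a neighbour of $x$; so $G-Y$ has only one component, contradicting that $Y$ is a cut set. That is the paper's entire proof. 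No case split on whether components are contained in $A_0$ is needed, and no accounting of how $A_0$ is distributed between $X$- and $Y$-components is needed.

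The specific steps in your plan that fail are: (i) the claim that $X$ ``lies entirely in one of the components of $G-Y$'' is unjustified, since $X$ could a priori be split among several components of $G-Y$ (fortunately you only ever need the single vertex $x$); (ii) a component of $G-X$ need not sit inside a single component of $G-Y$, so the inference ``hence lies in the component of $G-Y$ containing $y$'' does not typecheck; (iii) the proposed counting contradiction $\card{A_0}>r$ does not materialize in general --- for instance with $\card{X}=\card{Y}=r-1$ each relevant component is only required to contain one vertex of $A_0$, and if the components coincide there is no double-counting to exploit; and (iv) the ``cleanest route'' at the end hinges on $Y$ being disjoint from $X\cup\{y\}$, which is false because $y\in Y$. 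So as written the proposal does not close, even though every ingredient of the correct argument appears somewhere in it.
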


\begin{proof}
By Lemma \ref{lem:second round lemma}, there is some vertex $v \in X$ such that $v$ is adjacent to every vertex of $A_0$. Since \(X\cap Y=\emptyset\), it must be that \(X\) is contained within the components of \(G-Y\), thus \(v\) is in a component of \(G-Y\). Each component of \(G-Y\) must contain at least one vertex from \(A_0\) since \(\card{Y}< r\). Since \(v\) is adjacent to all of \(A_0\) we have \(G-Y\) is connected, a contradiction. 
\end{proof}

Another way to extend Lemma \ref{lem:2-BG graph structure} is by generalizing the notion of a block: a \emph{\(k\)-block} of a graph \(G\) is a maximal induced subgraph of \(G\) that is \(k\)-connected. With this notion, a \(2\)-block is just an ordinary block.  Matula \cite{Matula_1978} and Karpov \cite{Karpov_2007} have studied $k$-blocks.  In the case of an ordinary block, we are not concerned with the exact connectivity of the block, only that it is at least 2-connected.  We wish to be more precise with $k$-blocks.  Matula refers to a $k$-block which is not contained in a $k+1$-block as a $k$-ultrablock, while Karpov simply defines a $k$-block as a maximal induced subgraph that is not contained in a $k+1$-block.  In this section, for clarity, we will use Karpov's definition of a $k$-block.

We ask, what is the greatest number of $r$-blocks contained in an $r$-BG graph with a cut set of size less than $r$?  When $r = 2$, this is answered by Lemma \ref{lem:2-BG graph structure}, but for higher $r$, we have the following lower bound: 

\begin{thm}
\label{thm: r block lower bound}
Let $G$ be an $r$-BG graph with at least $r + 1$ vertices. If $X$ is a cut set of $G$ with $\card{X}<r$, then the maximum number of \(r\)-blocks that $G$ contains is at least $r(r-1)$.
\end{thm}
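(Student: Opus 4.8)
I read the conclusion as a lower bound on the largest number of $r$-blocks attainable under the hypotheses, so the plan is to exhibit, for each $r\ge 3$, a single witness $G_r$ and to verify three things: (i) $G_r$ has a cut set of size $<r$; (ii) $G_r$ is $r$-BG; and (iii) $G_r$ contains at least $r(r-1)$ pairwise distinct maximal induced $r$-connected subgraphs. I would present the construction first, then dispatch (i) and (iii) by inspection, and devote the bulk of the argument to (ii).

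The building block I would use is a \emph{petal}: two new vertices, adjacent to each other and to every vertex of a fixed clique $X=K_{r-1}$, so that $X$ together with a single petal induces a copy of $K_{r+1}$, which is $r$-connected. The point of taking $|X|=r-1$ is that two such copies meet in exactly $X$; by the separation argument in Lemma~\ref{lem:component structure lemma} the set $X$ is then an $(r-1)$-separator of their union, so the union is not $r$-connected and the two copies remain distinct $r$-blocks. Consequently each petal contributes its own $r$-block, (iii) becomes the purely combinatorial task of fitting $r(r-1)$ non-merging petals into $G_r$, and $X$ itself witnesses (i), since deleting $X$ breaks $G_r$ into the pieces carrying the petals.

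The genuine difficulty is (ii), and it is inseparable from (iii). If the petals attached to a common part of $G_r-X$ are left mutually non-adjacent, then that part splits into one component per petal, and Lemma~\ref{lem:component structure lemma} caps the number of components—hence the number of petals reachable this way—at $r$, far short of $r(r-1)$. To host many petals in few components one must link them, but any linkage that lets a bounded seed set ignite all petals tends to supply a second system of connecting paths (beside $X$), which raises the connectivity of $X$ together with that component to $r$ and thereby collapses all of its petals into one $r$-block, wrecking (iii). My plan for (ii) is therefore to (a) seed $r$ petal-vertices so that every vertex of $X$ sees all $r$ seeds and $X$ is infected on the first round, this being exactly the domination phenomenon quantified in Lemma~\ref{lem:second round lemma}; (b) introduce connector vertices that each meet at most $r-1$ vertices of any one $K_{r+1}$, so no connector can be absorbed into a petal's block; and (c) calibrate how many petal-vertices are seeded in each component so that, once $X$ is infected, the connectors ignite and then every remaining petal-vertex sees the $r-1$ vertices of $X$ plus one infected connector and becomes infected.

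The step I expect to be the main obstacle is making (b) and (c) compatible: showing that the connectors are rich enough to percolate the entire graph from only $r$ seeds, yet sparse enough that no $(r-1)$-separator is destroyed and the $r(r-1)$ petals survive as genuinely distinct maximal $r$-connected subgraphs. Quantitatively this is a tug-of-war between the ``at most $r$ components'' bound of Lemma~\ref{lem:component structure lemma} and the demand that each component still carry $\Theta(r)$ unmerged petals; pinning down a connector gadget that threads this needle—and proving the resulting maximality and distinctness claims in (iii)—is, I expect, where the real work of the proof lies.
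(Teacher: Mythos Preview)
Your proposal is not a proof: you correctly identify that the task is to exhibit a witness graph, but you explicitly leave the central step---the ``connector gadget'' reconciling (b) and (c)---unresolved, and you say as much in your final paragraph. As it stands, nothing concrete has been verified for any specific $G_r$.

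The paper sidesteps the tension you describe by a different gluing pattern, and this is the idea you are missing. You try to make every $K_{r+1}$ contain the whole cut set $X$; this forces the petals to be separated by $X$ and creates the component-count problem you diagnose. The paper instead lets each $K_{r+1}$ use only \emph{one} vertex of $X$. Concretely: take $X$ to be an \emph{independent} set of size $r-1$; in each of $r$ components of $G-X$ place a clique $K_{r-1}$ (the ``axis'') joined completely to $X$, and attach to the axis an independent set $S$ of $r-1$ further vertices, each of which is also joined to a distinct vertex of $X$. Then for each $s\in S$ the set $\{\text{axis}\}\cup\{s\}\cup\{x_s\}$ induces $K_{r+1}$, giving $r-1$ such cliques per component and $r(r-1)$ in total. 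Because the $r$-blocks within a component share the internal axis rather than $X$, each component is already a single connected piece and no connector vertices are needed at all.

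Percolation is then immediate: seed one axis vertex from each of the $r$ components; every vertex of $X$ sees all $r$ seeds and is infected; then each remaining axis vertex sees $X$ plus its seeded axis neighbour; finally each $s\in S$ sees its full axis plus its vertex of $X$. Maximality of each $K_{r+1}$ follows because adding any further vertex of $X$ or of $S$ yields a subgraph disconnected by removing the $r-1$ axis vertices (here the independence of $X$ and of $S$, and the fact that each $s$ meets only one vertex of $X$, are used), while adding anything from another component is disconnected by removing $X$. The whole argument is short and explicit; your framework with $X$ a clique and all blocks passing through it is the source of the difficulty, not an unavoidable feature of the problem.
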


\begin{proof}
We present a construction which contains $r$ components of $G - X$ and where the number of $r$-blocks in each component is $r - 1$.  Let $X$ be a cut set of $G$ containing $r - 1$ vertices.  Furthermore, let $X$ form an independent set of $G$.  Construct each of the \(r\) component of $G - X$ as follows: each component contains a copy of $K_{r-1}$.  Call this an axis.  Join the axis to every vertex of $X$ and also join an independent set of $r-1$ vertices, $S$, to every vertex of the axis.  Join each of the $r - 1$ vertices in \(S\) to a distinct vertex of $X$.  The axis together with each vertex of $S$ and its adjacent vertex of $X$ forms a copy of $K_{r+1}$.  

We now show that $G$ is $r$-BG and that each copy of $K_{r+1}$ is indeed an $r$-block.  Take a distinct vertex from the axis of each component and from these vertices form $A_0$.  Since we have $r$ components, this set of vertices then infects $X$.  The vertices of $X$ together with $A_0$ then infect the other $r - 2$ vertices of each axis.  Lastly, $X$ and the axes of each component infect the remaining vertices.  

Recall that an $r$-block is a maximal $r$-connected subgraph of $G$ which is not contained in an $r+1$-connected subgraph of $G$.  A copy of $K_{r+1}$ is indeed $r$-connected.  Furthermore, if we expand beyond any copy of $K_{r+1}$, the resulting subgraph is no longer $r$-connected.  A $K_{r+1}$ together with an additional vertex of $X$ is disconneced by removing the $r - 1$ vertices of the axis, since $X$ is an independent set and each vertex of $S$ is adjacent only to a single vertex of $X$.  If we expand by including a second vertex of $S$, this is also disconnected by removing the axis.  Lastly, if we include multiple components of $G - X$, these are disconneced by removing the $r - 1$ vertices of $X$.\end{proof}

Figure \ref{fig:3-BG, six 3-blocks example} contains an example of this construction when $r = 3$.  The white vertices are the vertices of $X$, the gray vertices are the vertices of the axes and the black vertices are the remaining vertices of $G$.  

\begin{figure}[h]
    \centering
    \includegraphics[scale=1]{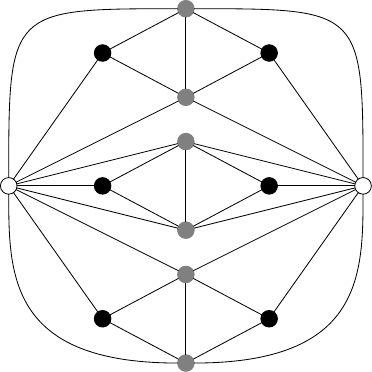}
    \caption{Six 3-blocks in a 3-BG graph}
    \label{fig:3-BG, six 3-blocks example}
\end{figure}

\section{Minimum number of rounds to percolation}

\begin{thm}
Let $G$ be a connected graph with diameter $d$.  Suppose $G$ contains a set of vertices, $A_0$, which percolates with threshold $r$ in $k$ rounds and $|A_0| \leq 2r - 1$.  Furthermore, assume that every vertex in $A_0$ infects some vertex in round 2, i.e., every vertex in $A_0$ is adjacent to at least one vertex in round 2.  Then $k \geq \lceil d/2 \rceil + 1$ and this bound is sharp.  
\end{thm}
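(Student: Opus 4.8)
The plan is to establish the lower bound $k \ge \lceil d/2 \rceil + 1$ by a diameter/distance argument, and then exhibit a family of graphs achieving equality. For the lower bound, fix two vertices $x, y \in V(G)$ with $\mathrm{dist}(x,y) = d$. Since $A_0$ percolates in $k$ rounds, both $x$ and $y$ lie in $A_k = V(G)$. I would track how far "new infection" can spread from $A_0$ per round. The key quantity is the distance from a newly infected vertex to the initial set $A_0$: a vertex $v$ infected in round $t$ (but not round $t-1$) has at least $r$ neighbors in $A_{t-1}$, and I want to argue that $v$ cannot be too far from $A_0$ in terms of $t$. The subtlety — and the reason for the hypothesis $|A_0| \le 2r-1$ and the assumption that every vertex of $A_0$ infects something in round $2$ — is that in the first round the infection can only reach vertices at distance $1$ from $A_0$, but after that the "frontier" could potentially jump by more than one step per round if $A_0$ is spread out; the cardinality bound limits how $A_0$ can be partitioned among far-apart regions.

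The main steps, in order: (1) Let $a \in A_0$ be a vertex, and consider the round-$2$ infected set $A_2$; show that every vertex of $A_1 \setminus A_0$ is within distance $1$ of $A_0$, and using the assumption that each vertex of $A_0$ has a round-$2$ neighbor, control the geometry of $A_2$ relative to $A_0$. (2) Prove by induction on $t$ that any vertex infected by round $t$ lies within distance roughly $2(t-1)$ of $A_0$ in an appropriate sense — more precisely, I expect the right statement is that for each vertex $v \in A_t$, $\mathrm{dist}(v, A_0) \le 2(t-1)$, or a variant accounting for the first round separately. The factor of $2$ comes from the fact that to create a new infection a vertex needs $r$ infected neighbors, and when $|A_0| \le 2r-1$ the new infections in consecutive rounds must "overlap" in a neighborhood, so the frontier advances by at most about one edge every half-round on average after the initial step — hence $d/2$ rather than $d$. (3) Combine: since $x$ and $y$ are both in $A_k$ and both within the relevant distance of $A_0$, we get $d = \mathrm{dist}(x,y) \le \mathrm{dist}(x, A_0) + \mathrm{diam}(A_0\text{-region}) + \mathrm{dist}(A_0, y)$; bounding each piece by something like $2(k-1)$ and $|A_0| \le 2r-1$-driven overlap, solve for $k$ to obtain $2(k-1) \ge d$, i.e. $k \ge \lceil d/2\rceil + 1$ (the ceiling appearing since $k$ is an integer). (4) For sharpness, construct a graph — e.g., a long path or a "barbell"/theta-like graph of diameter $d$ with a central dense gadget (a clique or complete bipartite piece) placed so that an $A_0$ of size $\le 2r-1$ sits in the middle and the infection propagates outward symmetrically at one vertex per round in each direction, reaching both ends after exactly $\lceil d/2\rceil$ additional rounds; verify the round count is $\lceil d/2 \rceil + 1$.

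I expect the main obstacle to be step (2): making precise the claim that the hypotheses $|A_0| \le 2r-1$ and "every vertex of $A_0$ infects in round $2$" force the infection frontier to advance slowly enough. The danger is a configuration where $A_0$ is split into two groups of about $r$ vertices sitting near two far-apart locations, each group self-sufficiently percolating its own region; the cardinality bound $|A_0| \le 2r-1$ is exactly what rules out having two disjoint groups each of size $\ge r$, but I must be careful that a single vertex of $A_0$ shared between "two fronts" still forces those fronts to originate from a common neighborhood. I would handle this by arguing that any vertex infected in round $2$ has $\ge r$ neighbors in $A_0$, so its neighborhood in $A_0$ has size $\ge r$; two round-$2$ vertices with disjoint $A_0$-neighborhoods would need $\ge 2r > |A_0|$ vertices, a contradiction, so all round-$2$ vertices share at least one common... — more carefully, any two round-$2$ vertices have $A_0$-neighborhoods meeting in $\ge 2r - |A_0| \ge 1$ vertices, giving a connected "core" of round-$\le 2$ infection of bounded diameter, from which the $2(t-1)$ bound follows by the usual one-edge-per-round estimate applied to later rounds. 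The remaining bookkeeping (tracking ceilings, and checking the construction hits the bound exactly rather than off by one) should be routine.
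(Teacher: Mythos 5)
Your proposal follows essentially the same route as the paper: build a "descending" path from any infected vertex back through the rounds (each newly infected vertex must have a neighbor infected in the immediately preceding round), use the pigeonhole bound $2r > 2r-1 \ge |A_0|$ to force any two round-$2$ vertices to share a neighbor in $A_0$, splice two descending chains at that common vertex to get a walk of length at most $2(k-1)$ between any two vertices, and exhibit a layered path infected from the middle for sharpness. Two points where your write-up as literally stated would not close, though your final paragraph already contains the fix for the first: the decomposition in step (3), with each of $\mathrm{dist}(x,A_0)$ and $\mathrm{dist}(y,A_0)$ bounded by $2(k-1)$ plus a separate term for the spread of $A_0$, is too lossy to yield $d \le 2k-2$; you need the sharper statement that a vertex infected in round $t$ is within distance $t-2$ of some round-$2$ vertex (one edge per round, not two), and then the single shared $A_0$-neighbor of two round-$2$ vertices supplies the remaining two edges --- this is exactly the paper's argument, and the hypothesis that every vertex of $A_0$ has a round-$2$ neighbor is what lets you start such a chain from a vertex of $A_0$ itself. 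Second, for sharpness an ordinary path of diameter $d$ cannot work for $r \ge 2$ (interior vertices have only two neighbors, so the infection cannot march outward one vertex per round); the paper's construction $P_{n,r}$ replaces each path vertex by an independent set of size $r$ with consecutive sets completely joined, which is the concrete form of the "dense gadget" you gesture at and does achieve $\lceil d/2\rceil + 1$ rounds when a middle layer is infected. (You may also want to check small $k$ separately: when $u,v\in A_0$ your splicing gives a path of length $4$ rather than $2k-2$, so the claimed inequality needs care when $k=2$.)
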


\begin{proof}
When numbering the rounds, we refer to the initial round as round 1. Partition $V(G)$ into sets $S_1, S_2, ..., S_k$, where $S_1 = A_0$ and for reach $i$, $S_i$ is the collection of vertices newly infected in round $i$.  Let $p$ be a vertex infected in round $q$, where $q \neq 1$.  Observe that $p$ is adjacent to no more than $r - 1$ vertices in $S_1$ through $S_{q-2}$ (otherwise, $p$ would have become infected in some round from 2 to $q-1$).  Since $p$ is adjacent to at least $r$ vertices in $S_1$ through $S_{q-1}$, then we know that $p$ is adjacent to at least one vertex in $S_{q-1}$.  By iterating this reasoning, we can find a path from a vertex in any round to some other vertex in any previous round. 

Let $u,v$ be two vertices in $G$ where $u \in S_i$ and $v \in S_j$, where $i,j \neq 1$.  If $j \geq i$, then by the above observation, we can form a path $v, v_{j-1}, ..., v_i$, where the index on each vertex is the round in which it was newly infected.  If $v_i = u$, then we have a $u - v$ path.  If not, then we can continue our path starting with $v$ and begin a new path starting with $u$ as follows: $v, v_{j-1}, ..., v_i, v_{i - 1}, ..., v_2$ and $u, u_{i - 1}, ..., u_2$.  If in some round $\ell$ we have $v_\ell = u_\ell$, then we have a $u-v$ path. 

On the other hand, if it is never the case that $v_\ell = u_\ell$, $\ell \geq 2$, we extend the path to the initial round.  Every vertex in $S_2$ must be adjacent to at least $r$ vertices in $S_1$, which implies that in particular, $v_2$ and $u_2$ are each adjacent to at least $r$ vertices in $S_1$. Since $|A_0| = |S_1| \leq 2r - 1$, by the pigeonhole principle, these two sets of $r$ vertices cannot be disjoint.  Hence, we can choose some $v_1 = u_1$ and we form a $u-v$ path.  A diagram of this process is shown in Figure \ref{fig:uvpath}.

Now, suppose that both $u$ and $v$ are in $S_1$.  Since we assumed that every vertex in $S_1$ infects at least one vertex in $S_2$, both $u$ and $v$ are adjacent to a vertex in $S_2$.  If both are adjacent to the same vertex, the we have a $u-v$ path of length 2.  If $u$ and $v$ are not adjacent to the same vertex, then we have two paths $u,u_2$ and $v,v_2$.  Since $v_2$ and $u_2$ are each adjacent to $r$ vertices in $S_1$, by the pigeonhole principle, $v_2, u_2$ are mutually adjacent to some $w \in S_1$ and so we have $v,v_2,w,u_2,u$, a $u-v$ path of length 4.  If only $u$ is in $S_1$, then by similar reasoning, we either have a $u-v$ path of length $j-1$ or a $u-v$ path of length $j+1$.  

Since we can use this method to construct a path between any two vertices in $G$, the diameter of $G$ cannot be any longer than the longest possible such path.  This occurs when both $u$ and $v$ are infected in the final round.  Since it takes $k - 1$ steps to go from the $k^{th}$ round to the $1^{st}$ round, we can write $d \leq 2k - 2$.  Solving for $k$ yields $d/2 + 1 \leq k$ and since the number of rounds must be an integer, we have $\lceil d/2 \rceil + 1 \leq k$.

Without the additional assumption that every vertex in $S_1$ infects some vertex in $S_2$, it is possible that at most $r - 1$ vertices in $S_1$ are adjacent to no vertices in $S_2$.  In which case, a path from a vertex in $S_k$ to a vertex in $S_1$ can have length at most $k + r - 2$ and then our lower bound depends on both $r$ and $d$ rather than $d$ alone.  
\end{proof}

\begin{figure}[h]
    \centering
    \includegraphics[scale=1]{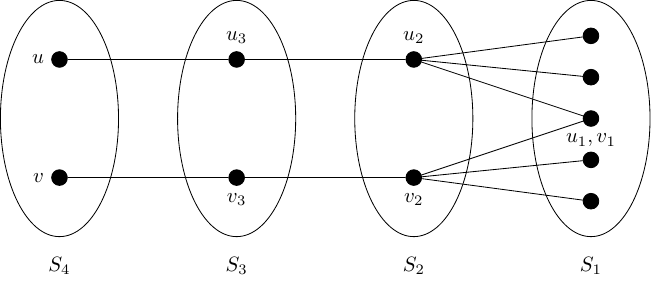}
    \caption{Finding a $u-v$ path.}
    \label{fig:uvpath}
\end{figure}

This bound is sharp.  Consider the following class of graphs.  Begin with $P_n$ and replace every vertex by a set of $r$ independent vertices.  Label these sets $B_1, ..., B_n$, where $B_i$ corresponds with vertex $i$ of $P_n$, and vertices labelled from left to right.  Join every vertex in $B_1$ to every vertex of $B_2$, and in general, every vertex in $B_i$ to every vertex in adjacent sets.  We denote a member of this family of graphs by $P_{n,r}$.  Figure \ref{fig:p52} shows this construction for $P_{5,2}$ and $P_{6,2}$. 

The diameter of graphs in this family is $n - 1$.  If we initially infect the middle set of vertices (for a graph where $n$ is odd), then the entire graph is infected when $B_1$ and $B_n$ become infected, which occurs after $\frac{n-1}{2} + 1$ rounds.  If $n$ is even, then if we initially infect either of the two centermost sets, the infection percolates when either $B_1$ or $B_n$ becomes infected (whichever is furthest from our starting set).  This requires $\frac{n}{2} + 1 = \lceil \frac{n-1}{2} \rceil + 1$ rounds.  In either case, we can see that the lower bound of $\lceil d/2 \rceil + 1$ rounds is attained.  

\begin{figure}[h]
    \centering
    \includegraphics[scale=1]{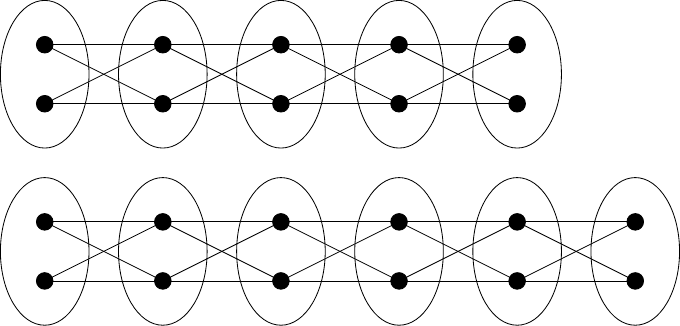}
    \caption{$P_{5,2}$ and $P_{6,2}$}
    \label{fig:p52}
\end{figure}

\begin{thm}
Let $G$ be a connected graph with a set of vertices $A_0$, which percolates in $k$ rounds with percolation threshold $r$.  If $|A_0| = r$, then $k \geq rad(G) + 1$ and this bound is sharp.  
\end{thm}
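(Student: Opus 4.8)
The plan is to exploit the rigidity that $\card{A_0}=r$ imposes on the process: the infection must spread out from a single initially infected vertex roughly like a breadth-first search, so that covering $V(G)$ takes at least $rad(G)$ spreading steps. Number the rounds so that the round containing $A_0$ is round $1$, and for $i\ge 1$ let $R_i$ be the set of vertices infected by the end of round $i$, so that $R_1=A_0$ and $R_k=V(G)$. We may assume $\card{V(G)}>r$; then, since $A_0$ percolates and $A_0\neq V(G)$, at least one new vertex is infected in round $2$. The first ingredient is the structural fact forced by $\card{A_0}=r$: a vertex $v\notin A_0$ is infected in round $2$ if and only if $\card{N(v)\cap A_0}\ge r$, i.e.\ if and only if $v$ is adjacent to \emph{every} vertex of $A_0$. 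In particular $T:=R_2\setminus A_0$ is nonempty and each vertex of $T$ dominates $A_0$.

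Next I would fix any $a\in A_0$, write $B_j(a)$ for the set of vertices at distance at most $j$ from $a$, and prove by induction that $R_i\subseteq B_{i-1}(a)$ for every $i\ge 3$. The base case is the substance of the argument. Since any two vertices of $A_0$ have a common neighbour in $T$ we get $A_0\subseteq B_2(a)$, and $T\subseteq B_1(a)$ because each vertex of $T$ is adjacent to $a$; hence $R_2\subseteq B_2(a)$. Now let $w$ be newly infected in round $3$; then $\card{N(w)\cap R_2}\ge r$ with these neighbours lying in $A_0\cup T$, and if they were all in $A_0$ then $w$ would be adjacent to all of $A_0$ and so already belong to $T\subseteq R_2$, a contradiction; thus $w$ has a neighbour in $T\subseteq B_1(a)$, giving $w\in B_2(a)$ and $R_3\subseteq B_2(a)$. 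The inductive step is then immediate: a vertex newly infected in round $i+1$ has a neighbour in $R_i\subseteq B_{i-1}(a)$ and hence lies in $B_i(a)$.

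Applying this with $i=k$ gives $V(G)=R_k\subseteq B_{k-1}(a)$, so $a$ has eccentricity at most $k-1$, whence $rad(G)\le k-1$, i.e.\ $k\ge rad(G)+1$. For sharpness I would return to the graphs $P_{n,r}$ defined above: every class $B_1,\dots,B_n$ has exactly $r$ vertices, so taking $A_0$ to be a central class (one of minimum eccentricity) produces a percolating set of size $r$; the infection then advances by exactly one class per round in each direction, so it finishes after precisely $rad(P_{n,r})+1$ rounds, and the bound is attained.

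The step I expect to be the main obstacle is the base of the induction. The vertices of $A_0$ need not be pairwise close in $G$, so a priori the ball around a single vertex $a\in A_0$ might fall behind the infected set in the opening rounds; the fix is precisely the structural fact above, which simultaneously caps all pairwise distances inside $A_0$ at $2$ (through any common neighbour in $T$) and forbids a round-$3$ vertex from being joined only to $A_0$. After rounds $2$ and $3$ are pinned down, every later round is handled by the trivial observation that one more round can enlarge the reachable ball by at most one.
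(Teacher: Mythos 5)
Your proof is correct and takes essentially the same approach as the paper's: both rest on the observation that $|A_0|=r$ forces every round-2 vertex to be adjacent to all of $A_0$, and both then show a fixed $a\in A_0$ has eccentricity at most $k-1$ by walking backward through the rounds (you via the ball-containment induction $R_i\subseteq B_{i-1}(a)$, the paper via explicit paths through $S_{i-1},\dots,S_2$), with the identical $P_{n,r}$ sharpness example. The one caveat, which the paper's own argument shares, is that your induction begins at $i=3$, so the conclusion requires $k\ge 3$; when $k=2$ both arguments give only $e(a)\le 2$, and indeed the stated bound can fail there (e.g.\ $C_4$ with $r=2$ and $A_0$ two antipodal vertices percolates in $k=2$ rounds while $rad(C_4)+1=3$).
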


\begin{proof}
Let $x$ be a vertex in $S_1$ and $y$ be a vertex in $S_i$, $1 < i \leq k$.  Using the same method as in Theorem 3.1, we can form a path $y,y_{i-1}, ..., y_{2}$, where $y_j \in S_j$.  Since $A_0$ contains exactly $r$ vertices, every vertex in $S_2$ must be adjacent to every vertex in $A_0$.  Hence, $y_2$ is adjacent to  $x$ and $y,y_{i-1}, ..., y_{2}, x$ is an $x-y$ path of length $ i - 1$.  If $y \in S_1$, then we can construct $y,y_2,x$, an $x-y$ path of length 2.

The greatest length of such a path is $k - 1$.  Since we can form a path from every vertex in $G$ to $x \in S_1$, we know that the eccentricity of $x$, $e(x)$, is at most $k - 1$.  We then have the following inequality: $rad(G) \leq e(x) \leq k - 1$.  Hence, $k \geq rad(G) + 1$.  This inequality is sharp because each $P_{n,r}$ contains a set of vertices which percolates in $rad(G) + 1$ rounds.
\end{proof}

\section{Maximum number of rounds to percolation}

In this section, we construct a family of graphs which show that given percolation threshold $r$ and diameter $d$, the number of rounds before the infection percolates is not bounded above.  We first construct a family of graphs with diameter 2 and with threshold $r = 2$ and then generalize the construction for arbitrary diameter and percolation threshold. 

We begin constructing $G$ by selecting an independent set of vertices.  Call this set $A_0$.  This set must contain at least $r$ vertices, but other than this there is no restriction on the cardinality of this set.  Next, join every vertex of $A_0$ to a vertex $x_1$.  After this, construct a path of length $s$ and denote the vertices $y_1, y_2, ..., y_s$ from left to right.  Join every vertex of the path to $x_1$ and join $y_1$ to exactly one vertex in $A_0$.  An example of this construction for $r = 2, s = 5$ is shown in Figure \ref{fig:d2r2}.

If we select $A_0$ as our initial set of infected vertices, then the infection percolates in $s + 2$ rounds.  This is because each vertex of the path cannot become infected until the previous vertex of the path is infected and $y_1$ cannot become infected until after $x_1$ is infected.  Since $x_1$ is a dominating vertex, our graph is diameter 2.

\begin{figure}[h]
    \centering
    \includegraphics[scale=1]{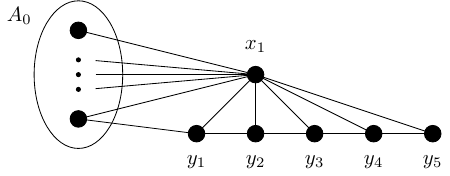}    
    \caption{A diameter 2 graph which percolates in 7 rounds with percolation threshold 2}
    \label{fig:d2r2}
\end{figure}

We generalize the construction as follows.  First, we construct $P_{(d-1),r}$.  We then join every vertex of the the $d-1^{st}$ set (the last one on the right) to $x_1, x_2, ..., x_{r-1}$.  Lastly, we form a path on $s$ vertices $y_1, y_2, ..., y_s$ and join every vertex of the path to $x_1, x_2, ..., x_{r-1}$.  Next, we join $x_1$ to a single vertex in the $d-1^{st}$ set.  The set of vertices $\{x_1, ..., x_{r-1}\}$ ensures that every vertex of the path $y_1, ..., y_s$ is within distance $d$ of our other vertices.  An example of this construction with diameter 4 and percolation threshold 3 is shown in Figure \ref{fig:d4r3}.

If we select the leftmost set of $r$ vertices of $P_{(d-1),r}$ as our initial set, such a graph percolates in $d + s$ rounds.  First the infection percolates through the $d-1$ sets.  After this, $x_1, ..., x_{r-1}$ become infected.  Next, $y_1$ becomes infected and then each $y_i$ becomes infected in turn.

\begin{figure}[h]
    \centering
    \includegraphics[scale=1]{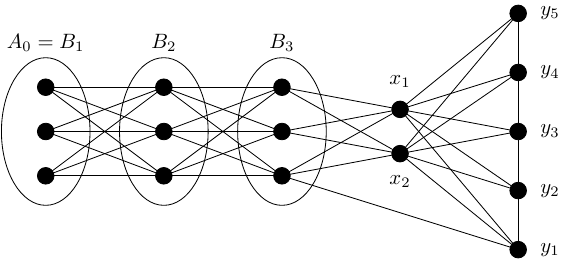}  
    \caption{A diameter 4 graph which percolates in 9 rounds with percolation threshold 3}
    \label{fig:d4r3}
      
\end{figure}

Although diameter is insufficient for an upper bound on the number of rounds, an upper bound using a different graph invariant is possible.  The \textit{detour distance} between two vertices $u,v$, denoted $D(u,v)$ is the length of the longest path between $u$ and $v$.  The \textit{detour eccentricity}, of a vertex, $v$, denoted $e_D(v)$, is the longest detour distance from $v$ to any other vertex.  The \textit{detour diameter} of a graph, $G$, denoted $diam_D(G)$ is the largest detour eccentricity among vertices in $G$.  Observe that the detour diameter is the length of the longest path in $G$.  These definitions and other facts about the detour distance are stated in Chartrand et. al. \cite{Chartrand_2004}.

\begin{thm}
If $G$ is a connected graph containing a percolates set which $r$-percolates in $k$ rounds, then  $k \leq diam_D(G)$ + 1.  
\end{thm}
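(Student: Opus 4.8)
The plan is to track, for each newly infected vertex, a witnessing path back to the initial set $A_0$, and to show that the length of this path grows by at most one per round, so that the last vertex infected is joined to $A_0$ by a path of length at most $k-1$; since any path in $G$ has length at most $diam_D(G)$, this yields $k-1 \leq diam_D(G)$, i.e. $k \leq diam_D(G)+1$. This is essentially the same path-chasing device already used in the proofs of the minimum-round theorems in Section 4, run in the opposite direction: there it bounds the diameter from above by a function of $k$, here we bound $k$ from above by the detour diameter.

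Concretely, I would again write $S_1 = A_0$ and let $S_i$ be the set of vertices first infected in round $i$, so $V(G) = S_1 \cup \dots \cup S_k$ with $S_k \neq \emptyset$ (taking $k$ minimal, i.e. the round in which percolation completes). First I would establish the key local fact: if $v \in S_q$ with $q \geq 2$, then $v$ has a neighbor in $S_{q-1}$. Indeed $v$ has at least $r$ neighbors in $S_1 \cup \dots \cup S_{q-1}$ but at most $r-1$ neighbors in $S_1 \cup \dots \cup S_{q-2}$ (otherwise $v$ would already be infected by round $q-1$), so at least one neighbor lies in $S_{q-1}$. Iterating this, starting from any $v \in S_q$, produces a path $v = v_q, v_{q-1}, \dots, v_1$ with $v_i \in S_i$ and consecutive vertices adjacent; this is a genuine path provided the $v_i$ are distinct, which they are since they lie in disjoint sets $S_i$. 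Hence every vertex $v \in S_q$ is joined to a vertex of $A_0$ by a path of length $q-1 \leq k-1$.

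To finish, pick any $v \in S_k$ (nonempty) and let $P$ be the path of length $k-1$ from $v$ to some $u \in A_0 = S_1$ constructed above. Then $D(u,v) \geq k-1$, so $diam_D(G) \geq e_D(u) \geq D(u,v) \geq k-1$, and therefore $k \leq diam_D(G)+1$, as claimed. (The edge case $k=1$, where $A_0 = V(G)$ already, is trivial since $diam_D(G) \geq 0$.)

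I do not anticipate a serious obstacle here; the construction is a direct reuse of the Section 4 argument and requires no appeal to the hypotheses ($|A_0| \leq 2r-1$ or the ``every vertex infects in round 2'' condition) that were needed there. The only point demanding a line of care is confirming that the back-tracked walk $v_q, \dots, v_1$ is a path rather than merely a walk — but this is immediate because its vertices are drawn from the pairwise-disjoint layers $S_1, \dots, S_q$, so no vertex repeats.
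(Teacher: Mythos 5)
Your proposal is correct and is essentially the paper's own argument: both back-track from a vertex infected in the final round through the disjoint layers $S_k, S_{k-1}, \dots, S_1$ (using that a vertex infected in round $q$ must have a neighbor in $S_{q-1}$) to produce a path of length $k-1$, and then bound that length by $diam_D(G)$. Your write-up is in fact slightly more careful than the paper's, since you explicitly verify that the back-tracked walk is a genuine path because the layers are pairwise disjoint.
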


\begin{proof}
Using the same process as in in the proof of Theorem 3.1, we form a path from a vertex in the $k^{th}$ round to a vertex in the initial round, where each vertex of the path is in a different round.  Such a path has length $k-1$.  Since $diam_D(G)$ is the length of the longest path in $G$, we know that $k - 1 \leq diam_D(G)$.  Hence, $k \leq diam_D(G) + 1$. 
\end{proof}

The above theorem gives an upper bound of $diam_D(G) + 1$, based on the idea that a path from the last round to the first round is in fact the longest path in the graph.  But when $r \geq 2$, we were able to extend such a path by at least one vertex in all the examples we examined.  Hence, we conjecture that when $r \geq 2$ the upper bound is in fact, $diam_D(G)$.

\begin{conj}
If $G$ is a connected graph containing a percolating set which $r$-percolates in $k$ rounds and $r \geq 2$, then $k \leq diam_D(G)$.  
\end{conj}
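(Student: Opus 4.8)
The plan is to sharpen the proof of Theorem~5.1. That proof produces, by the path-tracing argument used in the earlier proofs of Section~4, a path $T=v_k,v_{k-1},\dots,v_1$ with $v_i\in S_i$ (a \emph{transversal path}); since $v_1\in A_0$ this is a path of length $k-1$ in $G$, and because $diam_D(G)$ is the length of a longest path of $G$, it suffices to show that when $r\ge 2$ one can always exhibit a path of length at least $k$. The role of the hypothesis $r\ge 2$ is that every vertex infected in round $\ge 2$ has at least two earlier infected neighbours, and the goal is to convert one of these ``extra'' neighbours into one extra vertex on the path.

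First I would clear away the easy cases at the round-$k$ end. If $v_k$ has a neighbour $w\in S_k$, then $w,v_k,v_{k-1},\dots,v_1$ is a path of length $k$, since $w$ lies in round $k$ and hence is distinct from $v_1,\dots,v_k$; so we may assume $S_k$ is independent and every neighbour of $v_k$ lies in an earlier round. If $v_k$ has two neighbours in $S_{k-1}$, prepending the second one to $T$ again gives length $k$. Otherwise $v_k$ has a unique neighbour $v_{k-1}$ in $S_{k-1}$ and, needing at least $r\ge 2$ earlier infected neighbours, also a neighbour $z\in S_j$ with $j\le k-2$. Building transversal paths downward from $z$ and from $v_{k-1}$ as in those arguments: if they are vertex-disjoint, joining them through the edges $zv_k$ and $v_kv_{k-1}$ gives a path of length $j+k-1\ge k$; if instead they first meet at round $\ell$, then routing around the resulting cycle and appending the common segment down to round~$1$ keeps the same vertices as a simple path of length $j+k-1-\ell$, which is at least $k$ unless $\ell=j$, i.e.\ unless $z=v_j$ already lies on $T$. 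A symmetric analysis applies at the $A_0$ end using a vertex $v_2\in S_2$, which has at least $r\ge 2$ neighbours in $A_0$: if two of those are adjacent, or if $v_1$ has a neighbour off the path, one again gets a path of length $k$; there we are stuck only when $N(v_2)\cap A_0$ is independent and, after similar moves, $N(v_1)\subseteq V(T)$. After all such reductions the only surviving configuration is a rigid one in which, for \emph{every} choice of end-vertex $v_k\in S_k$ and transversal path $T$ through it, the entire neighbourhood $N(v_k)$, and dually $N(v_1)$, lies on $T$.

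The main obstacle is this rigid case, and here I would attempt an induction on the number of rounds. When $\card{S_k}=1$, deleting $v_k$ yields a graph $G'$ in which $A_0$ still percolates in exactly $k-1$ rounds --- nothing other than $v_k$ is infected later than round $k-1$, and $v_k$, being infected in the final round, is never used to infect anything --- so $k-1\le diam_D(G')$ by induction, the base cases $k\le 2$ being immediate. It then remains to show $diam_D(G)\ge diam_D(G')+1$, that is, that some longest path of $G'$ can be rerouted so that one of its ends is a neighbour of $v_k$, after which $v_k$ is appended; this is exactly the point at which the constraint $N(v_k)\subseteq V(T)$ should be brought to bear, and I expect establishing such a rerouting in full generality to be the crux. (When $\card{S_k}\ge 2$ one would delete all of $S_k$ and argue similarly, or else produce two almost-disjoint transversals ending in $S_k$.) An alternative to the induction is a direct but lengthy case analysis of how $v_2,v_3$ attach to $A_0$ and how $v_k,v_{k-1}$ attach below them, in the spirit of the reductions above; in every example we examined this bookkeeping produced the extra vertex, which is the evidence behind the conjecture, but carrying it out in complete generality --- equivalently, ruling out the rigid configuration --- is the part I expect to be genuinely hard.
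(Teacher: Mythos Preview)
This statement appears in the paper as a \emph{conjecture}, not a theorem; the authors give no proof. Their only evidence is the remark that in every example they examined the transversal path of length $k-1$ produced in Theorem~5.1 could be extended by one vertex, together with the caterpillar family showing that the bound, if true, is sharp. There is therefore no proof in the paper to compare yours against.

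As for your proposal itself: your reductions at the $S_k$ end and at the $A_0$ end are correct, and you rightly isolate the ``rigid'' configuration (every transversal path $T$ through $v_k$ absorbs all of $N(v_k)$, and dually for $v_1$) as the obstruction. But you explicitly do not resolve it. The inductive step you sketch hinges on $diam_D(G)\ge diam_D(G')+1$, and the constraint $N(v_k)\subseteq V(T)$ only pins the neighbours of $v_k$ to one particular path of length $k-2$ in $G'$; a longest path of $G'$, which by induction has length at least $k-1$, is under no obligation to end at---or to be reroutable so as to end at---one of those neighbours. The $|S_k|\ge 2$ case receives only a one-line gesture. So what you have written is an honest outline of a plausible strategy with its hard step clearly flagged, which is entirely consistent with the paper's own stance that the statement remains open.
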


For every $r$ and every value of $diam_D(G)$, it is possible to find a graph which percolates in $diam_D(G)$ rounds with threshold $r$.  This family of graphs are all caterpillars.  A \textit{caterpillar} is a tree which consists of a central path, each vertex of which has some number of leaves (possibly 0).  We construct these graphs as follows.  Form a caterpillar with a central path of length $diam_D(G) - 2$ and where all vertices of the path except the leftmost endpoint have $r - 1$ leaves.  The leftmost endpoint has $r$ leaves.  The longest path in such a graph is formed by moving from a leaf of the leftmost endpoint along the central path to a leaf of the rightmost endpoint.  If we begin the percolation process by infecting the leaves of the path, then such a graph percolates in $diam_D(G)$ rounds.  Figure \ref{fig:caterpillar} shows an example of such a graph for $r = 3$ and detour diameter $7$.  

\begin{figure}[h]
    \centering
    \includegraphics[scale=1]{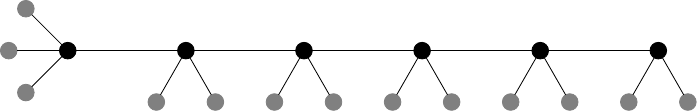}
    \caption{A graph which 3-percolates in $diam_D(G)$ rounds when $A_0$ is the set of gray vertices}
    \label{fig:caterpillar}
\end{figure}

\section{Open Questions}

1. The two lower bounds on the number of rounds to percolation given in this paper are based on radius and diameter.  It would be interesting to see lower bounds based on other graph invariants, or bounds for specific graph classes.  

2. Likewise the upper bound given in this paper is unconditional, but it is likely that the actual largest number of rounds for most graphs is substantially smaller than the length of the longest path.  Given other assumptions, what is the maximum number of rounds to percolation?  

3. What is the largest number of $r$-blocks contained in an $r$-BG graph with a cut set of size less than $r$? Other results on the structure of cut sets of an $r$-BG graph would also be interesting.  

\section{Acknowledgements}
The authors wish to thank Craig Larson for discussions which initiated and improved this paper and Ghidewon Abay-Asmerom for suggesting the concept of detour distance. RI was partially supported by NSF DMS--2204148 and by The Thomas F. and Kate Miller Jeffress Memorial Trust, Bank of America, Trustee.

\printbibliography

\end{document}